\newtheorem{theorem}{Theorem}[section]
\newtheorem{lemma}[theorem]{Lemma}
\theoremstyle{definition}
\newtheorem{definition}[theorem]{Definition}
\newtheorem{example}[theorem]{Example}
\title{Persistent Stanley–Reisner Theory}
	\author{Faisal Suwayyid\footnote{Corresponding author: Faisal Suwayyid (faisal.suwayyid@kfupm.edu.sa).~}$^{~1,2}$
		and Guo-Wei Wei\footnote{Corresponding author: Guo-Wei Wei (weig@msu.edu).~}$^{~2,3,4}$ \\% Author name
		$^1$Department of Mathematics,\\
		King Fahd University of Petroleum and Minerals, Dhahran 31261, KSA.\\
		$^2$Department of Mathematics,\\
		Michigan State University, MI 48824, USA.\\
		$^3$Department of Electrical and Computer Engineering,\\
		Michigan State University, MI 48824, USA.\\
		$^4$Department of Biochemistry and Molecular Biology,\\
		Michigan State University, MI 48824, USA.
	}
	\date{\today}
\begin{document}
	\maketitle 
	\begin{abstract}	    
Topological data analysis (TDA) has emerged as an effective approach in data science, with its key technique, persistent homology, rooted in algebraic topology. Although alternative approaches based on differential topology, geometric topology, and combinatorial Laplacians have been proposed, combinatorial commutative algebra has hardly been developed for machine learning and data science. In this work, we introduce persistent Stanley–Reisner theory to bridge commutative algebra, combinatorial algebraic topology, machine learning, and data science. We propose persistent h-vectors, persistent f-vectors, persistent graded Betti numbers, persistent facet ideals, and facet persistence modules. Stability analysis indicates that these algebraic invariants are stable against geometric perturbations. We employ a machine learning prediction on a molecular dataset to demonstrate the utility of the proposed persistent Stanley–Reisner theory for practical applications.
 		\end{abstract}
	
Keywords: Persistent h-vectors, persistent f-vectors, persistent graded Betti numbers, persistent facet ideals, and  Stanley–Reisner rings.

\section{Introduction}
	
Topological Data Analysis (TDA) provides a mathematical framework for extracting topological and geometric features in high-dimensional, high-order, and complex data, with the ability to characterize directional, temporal, and functional properties. A central tool in TDA is persistent (co)homology, an algebraic topology technique that tracks how connected components, loops, and higher-dimensional voids are formed across a parametrized family of spaces \cite{edelsbrunner2008persistent, zomorodian2004computing, zomorodian2005computing}. Filtration-based methods (e.g., the scale parameter in Vietoris-Rips complexes) identify these features and encode their evolution in persistence modules, whose persistence barcodes or persistence diagrams capture the lifespan of each feature \cite{chazal2016structure,ghrist2008barcodes}.  
Subsequent variants, including persistent images \cite{adams2017persistence}, persistent landscapes \cite{persistenceLandscapes2015Bubenik}, and persistent Betti numbers, incorporate additional geometric and higher-dimensional information. These methodologies are empowered by 
machine learning  \cite{cang2015topological,henselSurveyTopologicalMachine2021}. For instance, in 2017, Cang and Wei first introduced the topological deep learning (TDL) paradigm \cite{cang2017topologynet}  by integrating persistent homology with deep neural networks.
TDL is the new frontier for relational learning \cite{papamarkou2024position}. Topological machine learning has found broad applications in protein folding \cite{xia2014persistent}, protein-ligand binding \cite{cang2017topologynet}, virus mutation \cite{chen2020mutations}, drug discovery \cite{nguyen2019mathematical}, computational chemistry \cite{townsend2020representation}, dynamical systems \cite{pereaTopologicalTimeSeries2019}, signal processing \cite{myersTeaspoonComprehensivePython2020},  and neural spike decoding \cite{mitchell2024topological}. 

Despite the success, persistent homology has notable limitations. For example, it cannot distinguish a five-member ring from a six-member ring, an essential distinction in molecular science, nor capture non-topological changes in network evolution. To address these challenges, two persistent topological Laplacian approaches, i.e., Persistent Spectral Graph (PSG) for point cloud data \cite{wang2020persistent} and evolutionary de Rham-Hodge method for data on manifolds \cite{chen2021evolutionary} were introduced in 2019. Persistent  topological Laplacians  provide both harmonic and non-harmonic data spectra \cite{wang2020persistent,memoli2022persistent,liu2024algebraic}. The harmonic spectra of PSGs are the same as the topological invariants of persistent homology, while non-harmonic spectra can capture additional homotopic shape evolution. Many persistent topological Laplacians have been proposed on different topological domains, including simplicial complex \cite{wang2020persistent}, path complex   \cite{wang2023persistent}, cellular sheaf \cite{wei2025persistent2}, directed flag complex,  hypergraphs \cite{liu2021persistent}, hyperdigraphs,  etc. Persistent Mayer topology extends the standard chain complex to the $N$-chain complex \cite{shen2024persistent,suwayyid2024persistentMayer}. Quantum topological algorithms through persistent Dirac operators have also been proposed \cite{ameneyro2024quantum, wee2023persistent,suwayyid2024persistentDirac}. 
TDA has also been extended to data on differentiable manifolds by using differential topology \cite{chen2021evolutionary, wee2021forman} and on one-dimensional (1D) curves embedded in 3-space by using geometric topology \cite{shen2024evolutionary}. The reader is referred to a survey for these advancements \cite{wei2025persistent}.  

Combinatorial commutative algebra is a relatively new mathematical field that integrates combinatorial topology and commutative algebra \cite{miller2005combinatorial,eisenbud2013commutative}, offering unique approaches for analyzing simplicial complexes. One of the major subjects in this field is the Stanley-Reisner ring, which encodes simplicial structure as a square-free monomial ideal in a polynomial ring \cite{stanley1996combinatorics, bruns1998cohen}. Hochster’s formula connects the ring’s graded Betti numbers to topological invariants of induced subcomplexes, linking discrete geometry, commutative algebra, combinatorics, and algebraic geometry.

Few methods, however, integrate the multiscale view of TDA with the refined algebraic invariants arising from the Stanley-Reisner theory. In this work, we introduce a persistent Stanley-Reisner theory (PSRT) and examine how the Stanley-Reisner structure of a simplicial complex evolves under filtration. We develop persistent analogs of classical invariants, including persistent graded Betti numbers via Hochster’s formula, persistent \( f \)-vectors, persistent \( h \)-vectors, and persistent facet ideals. We also define facet persistence barcodes, which record the birth and death of persistent facet ideals as the simplicial complex evolves. These persistence barcodes exhibit stability properties similar to those of standard persistent homology and provide novel insights into geometric, topological, and combinatorial features at multiple scales, complementing operator-based approaches such as persistent Laplacian and persistent Dirac methods.

We demonstrate the effectiveness of PSRT by capturing subtle multiscale changes in simplicial complexes. This technique can be applied to molecular structures and the classification of metal halide perovskite phases based on structural data. In many cases, persistent facet barcodes provide computationally and conceptually distinct perspectives, efficiently reflecting geometric and combinatorial substructures.  

The remainder of this paper is organized as follows. Section~2 reviews the classical Stanley-Reisner theory, its Hilbert series, and Hochster’s formula. Section~3 introduces the persistent Stanley-Reisner theory, including persistent graded Betti numbers, persistent h-vectors, f-vectors, persistent facet ideals, and  facet persistence module. Furthermore, Section~3 establishes stability theorems analogous to those in persistent homology, ensuring that a small perturbation of the filtration induces proportionally small changes in the facet persistence barcodes and critical values. Section~4 presents applications to molecular data, highlighting the strengths of these persistent commutative algebra in distinguishing subtle structural isomers and classification application.
	
\section{Stanley-Reisner Rings and Their Hilbert Series}\label{Stanley-ReisnerRing}
One of the central constructions in algebraic combinatorics is the 
\emph{Stanley-Reisner ring} (also called the \emph{face ring}) associated with 
a simplicial complex \cite{stanley1996combinatorics, bruns1998cohen}.  This construction encodes the combinatorial data of the complex into a graded commutative ring, thereby allowing one to apply tools 
from commutative algebra to study invariants such as homology and other 
combinatorial information.  In this section, we recall the basic definitions 
of the Stanley-Reisner ring and examine its Hilbert series, functions, and polynomials. 
Both the Hilbert series and Hilbert function are used in commutative algebra and algebraic geometry to study the growth of graded structures like polynomial rings and their quotients.  
In particular, we highlight the relationship between the $f$-vector of a 
simplicial complex and its $h$-vector, where the latter is reflected in the 
Hilbert function of the associated Stanley-Reisner ring.

\medskip
\noindent
Let \(\Delta\) be a \emph{simplicial complex} on the vertex set
\begin{equation}\label{eq:vertex-set}
	V \;=\; \{\,x_1, x_2, \dots, x_n\,\}.
\end{equation}
By definition, \(\Delta\) is a collection of subsets (called \emph{faces} or 
\emph{simplices}) of \(V\) satisfying:
\begin{enumerate}
	\item (Hereditary property) If \(F\in\Delta\) and \(G\subseteq F\), then 
	\(G\in\Delta\).
	\item Every singleton \(\{x_i\}\) belongs to \(\Delta\).  
	In particular, all vertices are included as faces.
\end{enumerate}
A face with \(r+1\) vertices is called an \emph{\(r\)-dimensional face}, 
and the \emph{dimension} of \(\Delta\) is the maximum dimension of its faces.  
A \emph{facet} of \(\Delta\) is a face that is maximal under inclusion, and 
we denote the set of all facets by \(\mathcal{F}(\Delta)\).

Let \(k\) be a field and consider the polynomial ring
\begin{equation}\label{eq:S-def}
	S \;=\; k[x_1, x_2, \dots, x_n],
\end{equation}
equipped with the standard \(\mathbb{Z}\)-grading defined by 
\(\deg(x_i) = 1\) for all \(i\).  The \emph{Stanley--Reisner ideal} of 
\(\Delta\) is given by
\begin{equation}\label{eq:SR-ideal}
	I(\Delta)
	\;=\;
	\Bigl\langle 
	x_{i_1} \,x_{i_2} \,\cdots\, x_{i_r}
	\;:\;
	\{\,x_{i_1},x_{i_2},\dots,x_{i_r}\}\notin\Delta 
	\Bigr\rangle,
\end{equation}
and the corresponding \emph{Stanley--Reisner ring} is the quotient
\begin{equation}\label{eq:SR-ring}
	k[\Delta]
	\;=\;
	S \,/\, I(\Delta),
\end{equation}
which inherits the \(\mathbb{Z}\)-grading from \(S\).  For each \(d\ge 0\), 
the homogeneous component of degree \(d\) is
\begin{equation}\label{eq:homogeneous-comp}
	k[\Delta]_d 
	\;=\;
	\bigl\{\overline{x}^\alpha : \lvert \alpha \rvert = d\bigr\},
\end{equation}
where \(\overline{x}^\alpha = \prod_{i=1}^n \overline{x}_i^{\,\alpha_i}\) and
\(\lvert \alpha \rvert = \alpha_1 + \cdots + \alpha_n\).

It is well known that
\begin{equation}\label{eq:Krull-dim}
	\dim \bigl(k[\Delta]\bigr)
	\;=\;
	\dim(\Delta) \;+\; 1.
\end{equation}

A finitely generated \(\mathbb{Z}\)-graded \(S\)-module 
\begin{equation}\label{eq:graded-module}
	M 
	\;=\;
	\bigoplus_{d\ge 0} M_d
\end{equation}
has a \emph{Hilbert function} defined by
\begin{equation}\label{eq:Hilbert-function}
	H(M,d) 
	\;=\;
	\dim_k (M_d).
\end{equation}
In the case \(M = k[\Delta]\), we set
\begin{equation}\label{eq:Hilbert-function-Delta}
	H(\Delta,d)
	\;=\;
	\dim_k\bigl(k[\Delta]_d\bigr),
\end{equation}
and define the \emph{Hilbert series} of \(\Delta\) by
\begin{equation}\label{eq:Hilbert-series-def}
	H_{\Delta}(s)
	\;=\;
	\sum_{d\ge 0} \dim_k \bigl(k[\Delta]_d\bigr)\,s^d
	\;=\;
	\sum_{d\ge 0} H(\Delta,d)\,s^d.
\end{equation}
For a \((d-1)\)-dimensional simplicial complex \(\Delta\), it is classical 
that
\begin{equation}\label{eq:Hilbert-series-classical}
	H_{\Delta}(s)
	\;=\;
	\frac{\,h_0 + h_1\,s + \cdots + h_d\,s^d\,}{(1-s)^d},
\end{equation}
where \(d = \dim(\Delta) + 1\) and \((h_0,h_1,\dots,h_d)\) is the 
\emph{\(h\)-vector} of \(\Delta\) (equivalently, of \(k[\Delta]\)).

As a graded \(S\)-module, \(k[\Delta]\) admits a minimal free resolution of 
the form
\begin{equation}\label{eq:min-free-res}
	\cdots 
	\;\longrightarrow\;
	\bigoplus_{j} S(-j)^{\beta_{i,j}\!\bigl(k[\Delta]\bigr)}
	\;\longrightarrow\;
	\cdots 
	\;\longrightarrow\;
	\bigoplus_{j} S(-j)^{\beta_{0,j}\!\bigl(k[\Delta]\bigr)}
	\;\longrightarrow\;
	k[\Delta]
	\;\longrightarrow\;
	0,
\end{equation}
where $ S(-j) $ is the graded free module $S$ shifted in degree by $j$ and  {\it graded Betti numbers} are
\begin{equation}\label{eq:Betti-numbers-def}
	\beta_{i,j}\!\bigl(k[\Delta]\bigr)
	\;=\;
	\dim_k \operatorname{Tor}^S_i\!\bigl(k[\Delta], k\bigr)_j,
\end{equation}
with ${\rm Tor}^S_i\!\bigl(k[\Delta], k\bigr)_j$ being the Tor module, which measures how nontrivial the resolution is at homological degree $i$.    
For a subset \(W \subseteq V\), the \emph{restriction} (or induced subcomplex) 
of \(\Delta\) to \(W\) is 
\begin{equation}\label{eq:induced-subcomplex}
	\Delta_W
	\;=\;
	\{\,
	\tau \in \Delta 
	: 
	\tau \subseteq W
	\}.
\end{equation}

A fundamental result in this theory is \emph{Hochster's formula}, which 
expresses the graded Betti numbers of \(k[\Delta]\) in terms of the reduced 
homology of its induced subcomplexes.

\begin{theorem}[Hochster's Formula]
	\label{thm:Hochster}
	For a simplicial complex \(\Delta\) on \(\{x_1,\dots,x_n\}\) and all integers 
	\(i,j \geq 0\),
	\begin{equation}\label{eq:Hochster-formula}
		\beta_{i,j}\!\bigl(k[\Delta]\bigr)
		\;=\;
		\dim_k \operatorname{Tor}^S_i\!\bigl(k[\Delta],k\bigr)_j 
		\;=\;
		\sum_{\substack{W \subseteq \{\,x_1,\dots,x_n\,\} \\ |W|=j}}
		\dim_k \widetilde{H}_{\,j-i-1}\!\bigl(\Delta_W; k\bigr),
	\end{equation}
	where \(\widetilde{H}_{r}\!\bigl(\Delta_W; k\bigr)\) is the 
	\(r\)th reduced homology group of \(\Delta_W\) with coefficients in \(k\).
\end{theorem}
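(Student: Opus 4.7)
The plan is to compute $\operatorname{Tor}^S_i(k[\Delta], k)$ using the Koszul resolution of $k$ over $S$ and then to exploit the fine $\mathbb{Z}^n$-grading shared by $k[\Delta]$ and the Koszul complex to break the computation into multigraded strands. After showing that only squarefree multidegrees contribute, I will identify each such strand with a degree-shifted copy of the augmented reduced simplicial chain complex of the corresponding induced subcomplex, and then collect the contributions with $|W| = j$.

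Concretely, let $K_\bullet = K_\bullet(x_1,\dots,x_n;\,S)$ denote the Koszul complex on the variables, a minimal $\mathbb{Z}^n$-graded free resolution of $k$ over $S$, so that $\operatorname{Tor}^S_i(k[\Delta], k) \cong H_i(k[\Delta] \otimes_S K_\bullet)$. The tensor complex inherits a $\mathbb{Z}^n$-grading and therefore splits as a direct sum of subcomplexes indexed by $\alpha \in \mathbb{Z}^n_{\geq 0}$. A basis of $(k[\Delta] \otimes_S K_i)_\alpha$ is given by the symbols $\overline{x}^\beta \otimes e_\sigma$ with $\sigma \subseteq \{1,\dots,n\}$, $|\sigma| = i$, $\beta + \mathbf{1}_\sigma = \alpha$, and $\operatorname{supp}(\beta) \in \Delta$. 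A standard contracting-homotopy argument, pairing multiplication by $x_i$ against contraction with $e_i$ whenever $\alpha_i \geq 2$, shows that the strand in every non-squarefree multidegree is acyclic. Hence only squarefree $\alpha = \mathbf{1}_W$ with $|W| = j$ can contribute to $\operatorname{Tor}^S_i(k[\Delta], k)_j$.

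For such a strand, the support condition $\operatorname{supp}(\beta) \in \Delta$ forces $\beta = \mathbf{1}_{W \setminus \sigma}$ with $W \setminus \sigma \in \Delta_W$. Setting $\tau = W \setminus \sigma$ puts the basis of $(k[\Delta] \otimes_S K_i)_{\mathbf{1}_W}$ in bijection with the faces of $\Delta_W$ of cardinality $j - i$, that is, of dimension $j - i - 1$. A direct sign comparison shows that the Koszul differential restricted to this strand agrees, up to an overall sign, with the reduced simplicial boundary on $\widetilde{C}_\bullet(\Delta_W; k)$, with homological index $i$ matched to simplicial dimension $j - i - 1$. Taking $H_i$ therefore yields $\widetilde{H}_{j-i-1}(\Delta_W; k)$, and summing over all $W \subseteq V$ with $|W| = j$ gives the identity in \eqref{eq:Hochster-formula}.

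The main obstacle will be the multigraded bookkeeping: verifying that the condition $\operatorname{supp}(\beta) \in \Delta$ precisely encodes the restriction from $\Delta$ to the induced subcomplex $\Delta_W$, tracking the Koszul signs so that one recovers the reduced simplicial boundary rather than an arbitrary sign-twist, and writing down the contracting homotopy in non-squarefree degrees explicitly enough to confirm acyclicity. None of these points is individually deep, but together they carry the entire content of the theorem; once they are in place, Hochster's formula follows by collecting the $\mathbb{Z}^n$-graded contributions.
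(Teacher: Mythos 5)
The paper does not prove Hochster's formula; it states it as a classical result, citing Stanley and Bruns--Herzog, so there is no internal proof to compare against. Your proposal is the standard textbook argument: resolve $k$ by the $\mathbb{Z}^n$-graded Koszul complex $K_\bullet$, split $k[\Delta]\otimes_S K_\bullet$ into multigraded strands, kill the non-squarefree strands by a contracting homotopy, and identify the strand at a squarefree degree $\mathbf{1}_W$ with the reduced (co)chain complex of $\Delta_W$. That high-level plan is correct, and the acyclicity argument in degrees with some $\alpha_i\ge 2$ (via wedging/contracting against $e_i$, using $\operatorname{supp}(\beta-e_i)\subseteq\operatorname{supp}(\beta)\in\Delta$ to keep the class nonzero) is fine.

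However, one step as you have written it is wrong in a way you should fix before it could be called a proof. In the squarefree strand $\alpha=\mathbf{1}_W$ with $|W|=j$, the basis at Koszul homological index $i$ is, as you say, in bijection with faces $\tau = W\setminus\sigma$ of $\Delta_W$ of cardinality $j-i$. But the Koszul differential $\partial_i$ lowers $i$ by one, hence raises $|\tau|$ by one: on a basis element it sends $\tau$ to a signed sum of $\tau\cup\{l\}$ over $l\in W\setminus\tau$ with $\tau\cup\{l\}\in\Delta_W$. After reindexing by simplicial dimension this is a map $\widetilde{C}^{\,j-i-1}(\Delta_W;k)\to\widetilde{C}^{\,j-i}(\Delta_W;k)$, i.e.\ the reduced simplicial \emph{coboundary}, not the boundary. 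So the strand computes $\widetilde{H}^{\,j-i-1}(\Delta_W;k)$, not $\widetilde{H}_{\,j-i-1}(\Delta_W;k)$, and a ``direct sign comparison with the reduced boundary'' cannot succeed. The stated formula is still correct because $k$ is a field, so $\dim_k\widetilde H^{\,r}(\Delta_W;k)=\dim_k\widetilde H_{\,r}(\Delta_W;k)$ by universal coefficients; alternatively you can identify the strand with the reduced chain complex of the Alexander dual of $\Delta_W$ and invoke combinatorial Alexander duality. Either way, you need to insert that conversion explicitly rather than matching the Koszul differential to the boundary operator.
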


\noindent
Hochster's formula shows that the homological properties of the induced 
subcomplexes of \(\Delta\) completely govern the graded Betti numbers of 
\(k[\Delta]\).  In particular, since \(\widetilde{H}_{j-i-1}(\Delta_W;k)\) 
is typically trivial for \(j-i-1 < -1\), it follows that 
\(\beta_{i,j}(k[\Delta])=0\) for \(j < i\).  Additional relationships include 
\(\beta_{i,i}(k[\Delta])=0\) for \(i \ge 1\), \(\beta_{0,0}(k[\Delta])=1\), 
and \(\beta_{i,1}(k[\Delta]) = \beta_{0,j}(k[\Delta])=0\) for \(j\ge 1\) and 
\(i\ge 0\).  Moreover, since \(\widetilde{H}_s(\Delta;k) = 0\) for 
\(s > \dim(\Delta)\), the summation in 
\eqref{eq:Hochster-formula} is typically nontrivial only for 
\(j \le \min\{n,\dim(\Delta)+i+1\}\).

Often, one restates Hochster's formula in the form
\begin{equation}\label{eq:Hochster-alt}
	\beta_{i,j+i}\!\bigl(k[\Delta]\bigr)
	\;=\;
	\sum_{\substack{W \subseteq \{\,x_1,\dots,x_n\,\} \\ |W|=j+i}}
	\dim_k \widetilde{H}_{\,j-1}\!\bigl(\Delta_W;k\bigr),
\end{equation}
for \(1 \le i \le n-1\) and \(1 \le j \le \min\{\,n-i,\dim(\Delta)+1\}\).  
In particular, for \(j=1\),
\begin{equation}\label{eq:Hochster-j1}
	\beta_{i,i+1}\!\bigl(k[\Delta]\bigr)
	\;=\;
	\sum_{\substack{W \subseteq \{\,x_1,\dots,x_n\,\} \\ |W| = i+1}}
	\bigl(\,\beta_{0}(\Delta_W) - 1\bigr),
\end{equation}
and for \(j\ge 2\),
\begin{equation}\label{eq:Hochster-j2}
	\beta_{i,j+i}\!\bigl(k[\Delta]\bigr)
	\;=\;
	\sum_{\substack{W \subseteq \{\,x_1,\dots,x_n\,\} \\ |W| = j+i}}
	\beta_{j-1}(\Delta_W),
\end{equation}
where $\beta_{0}$ and $\beta_{j-1}$ are the Betti number of the corresponding reduced homology groups.
These formulas explicitly connect the combinatorial topological invariants of \(\Delta\) 
to the algebraic structure of its Stanley--Reisner ring. 
While one can arrange these Betti numbers in a table with rows indexed by \(j\) and columns by \(i\), 
\[
\begin{array}{c|cccc}
	& i=0 & i=1 & i=2 & \cdots \\ \hline
	j=0 & \beta_{0,0} & \beta_{1,0} & \beta_{2,0} & \cdots \\
	j=1 & \beta_{0,1} & \beta_{1,1} & \beta_{2,1} & \cdots \\
	j=2 & \beta_{0,2} & \beta_{1,2} & \beta_{2,2} & \cdots \\
	\vdots & \vdots & \vdots & \vdots & \ddots
\end{array}
\]
However, due to an alternative convention, the indexing in the Betti table differs from the above. The graded Betti numbers are written as \( \beta_{i,j} \), where \( i \) denotes the homological degree and \( j \) the total internal degree. However, in many Betti tables (such as those produced by the Macaulay2 package), the table entries are displayed at position \( (i, j-i) \), where the row index corresponds to the difference \( j-i \).
\[
\begin{array}{c|cccc}
	& i = 0 & i = 1 & i = 2 & \cdots \\ \hline
	j - i = 0 & \beta_{0,0} & \beta_{1,1} & \beta_{2,2} & \cdots \\
	j - i = 1 & \beta_{0,1} & \beta_{1,2} & \beta_{2,3} & \cdots \\
	j - i = 2 & \beta_{0,2} & \beta_{1,3} & \beta_{2,4} & \cdots \\
	\vdots & \vdots & \vdots & \vdots & \ddots
\end{array}
\]
	
	\begin{example}%[Graded Betti numbers]
		
		\begin{figure}[h!]
			\centering
			\subfigure{
				\includegraphics[width=0.33\textwidth]{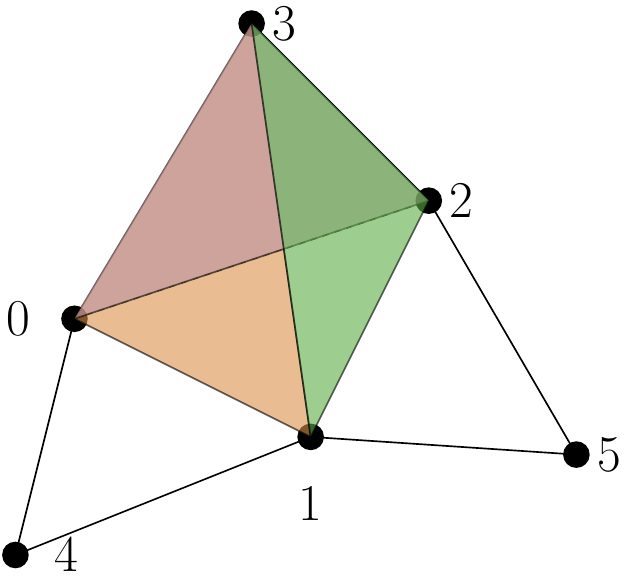} % Path to Figure 1
			}
			\caption{A geometric representation of six vertices forming a pyramid with two distinct loops, each attached to different edges.}
			\label{fig:pyramid}
		\end{figure}
		Graded Betti numbers provide insight into the structure and complexity of a module, particularly in algebraic geometry and commutative algebra. We consider the simplicial complex depicted in Figure \ref{fig:pyramid}. We compute its graded Betti table $ \beta_{i,j}$ via Hochster's formula. The faces of the simplicial complex are:
		\[
		\begin{aligned}
			&\{x_0\}, \quad \{x_1\}, \quad \{x_2\}, \quad \{x_3\}, \quad \{x_4\}, \quad \{x_5\}, \\
			&\{x_0, x_1\}, \quad \{x_0, x_2\}, \quad \{x_0, x_3\}, \quad \{x_0, x_4\}, \quad \{x_1, x_2\}, \\
			&\{x_1, x_3\}, \quad \{x_1, x_4\}, \quad \{x_1, x_5\}, \quad \{x_2, x_3\}, \quad \{x_2, x_5\} \\
			&\{x_0, x_1, x_2\},\quad \{x_0, x_1, x_3\},\quad \{x_0, x_2, x_3\}, \quad \{x_1, x_2, x_3\}.
		\end{aligned}
		\]
		Observe that the entries of the first column of the table are identically zero except for the first one, which is equal to one; this is because, among any \(j\) vertices, there are insufficient \((j-1)\)-dimensional faces to contribute nontrivially to the kernel of the corresponding boundary operator in the associated chain complex. Consequently, our computation commences with the row corresponding to \(j=2\), where only \(\beta_{1,2}\) requires evaluation. In this case, \(\beta_{1,2}\) is determined by enumerating all pairs of vertices not connected by an edge, yielding exactly five such pairs.

		\begin{table}[h]
			\centering
    \begin{tabular}{c|ccccc}
	& 0 & 1 & 2 & 3 & 4 \\ 
	\midrule
	0  & 1  & 0  & 0  & 0  & 0  \\
	1  & 0  & 5  & 0  & 0  & 0  \\
	2  & 0  & 2  & 6  & 0  & 0  \\
	3  & 0  & 1  & 6  & 2  & 0  \\
	4  & 0  & 0  & 2  & 6  & 0  \\
	5  & 0  & 0  & 0  & 1  & 2  \\
\end{tabular}
			\caption{Table of graded Betti numbers of the simplicial complex in Figure \ref{fig:pyramid}.}
			\label{tab:betti}
		\end{table}
		
Proceeding to the case \(j=3\), we evaluate both \(\beta_{1,3}\) and \(\beta_{2,3}\). Here, \(\beta_{2,3}\) is defined as the number of three vertex subsets containing at least one isolated vertex, which is found to be six. In contrast, \(\beta_{1,3}\) is obtained by counting all the triangles (3-cycles) present in the simplicial complex, which results in precisely two.

For \(j=4\), we determine the values \(\beta_{i,4}\) for \(i=1,2,3\). Specifically, \(\beta_{3,4}\) is computed by identifying all four-vertex subsets whose induced subcomplex is disconnected (i.e., consists of at least two connected components), resulting in two such subsets. The computation of \(\beta_{2,4}\) involves enumerating the four-vertex subsets whose induced subcomplex contains a cycle (a three-vertex loop); this yields six subsets, each featuring exactly one loop. Finally, \(\beta_{1,4}\) is derived by identifying those four-vertex subsets whose induced subcomplex exhibits at least one void, leading to a unique instance corresponding to the pyramid.

This iterative procedure is continued until the entire table of graded Betti numbers is wholly determined. The full result is shown in Table \ref{tab:betti}.

	\end{example}

	\section{ Persistent Stanley-Reisner Theory}
	In this section, we introduce \emph{persistent Stanley-Reisner theory}, including \emph{persistent Stanley-Reisner Graded Betti numbers} obtained as extensions of Hochster’s Formula over filtration, \emph{persistent h-vectors}, \emph{persistent f-vectors}, the \emph{persistent facet ideals}, and \emph{facet persistence modules}. We also analyze the \emph{stability} of the persistent Stanley-Reisner theory. We begin by defining a class of functions that naturally align with the combinatorial structure of abstract simplicial complexes.
	
	%	\begin{definition}
		Let \(\Delta\) be an abstract simplicial complex on a vertex set \(V\), and let \(f: \Delta \to \mathbb{R}\) be a real-valued function defined on the simplices of \(\Delta\). The function \(f\) is said to be \emph{monotonic} if for every pair of simplices \(\sigma, \tau \in \Delta\) with \(\tau \subseteq \sigma\), we have
		\[
		f(\tau) \leq f(\sigma).
		\]
		In other words, \(f\) is monotonic if the value of \(f\) increases (or remains constant) as we move from a face to a containing simplex.
		%	\end{definition}
	%		\begin{definition}[\textbf{Induced Filtration}]\label{def:induced-filtration}
		Given a monotonic function \(f\colon \Delta \to \mathbb{R}\), we denote its induced filtration by
		\begin{equation}\label{eq:filtration}
			\widetilde{f} \;=\; (\Delta_f^t)_{t\in \mathbb{R}},
		\end{equation}
		where each \(\widetilde{f}(t) = \Delta_f^t \subseteq \Delta\) is a subcomplex defined by
		\[
		\Delta_f^t \;=\; \{\sigma \in \Delta \mid f(\sigma) \le t\}.
		\]
		Because \(f\) is \emph{non-decreasing} on simplices, whenever \(t \le r\) we have 
		\[
		\Delta^t_f \;\subseteq\; \Delta^r_f.
		\]
		We often write \(\Delta^t\) instead of \(\widetilde{f}(t)\) or \(\Delta^t_f\) if there is no ambiguity.  The persistent Stanley--Reisner ideal of each subcomplex \(\Delta^t\) is denoted \(I(\Delta^t)\).
		%	\end{definition}
	Throughout this section, let \(\Delta\) be a simplicial complex, and let
	\(f \colon \Delta \to \mathbb{R}\) be a monotonic function.

	\subsection{Persisetnt Stanley-Reisner Graded Betti Numbers and Extensions of Hochster's Formula}
	
	In the context of a family of simplicial complexes indexed by a real parameter, 
persistent homology provides a powerful way to track how topological features 
(e.g., connected components, cycles) evolve as the parameter changes.  
We begin by recalling the definition of persistent homology groups in the single-parameter setting, followed by an induced subcomplex filtration and its connection to Hochster's formula.

\begin{definition}[Persistent Homology Groups]\label{def:persistent-homology}
	Let \((\Delta^t)_{t \in \mathbb{R}}\) be a filtration of simplicial complexes 
	with \(\Delta^t \subseteq \Delta^{t'}\) whenever \(t \le t'\).  For each \(q \ge 0\) 
	and each pair of real numbers \(t \le t'\), the inclusion 
	\(\Delta^t \subseteq \Delta^{t'}\) induces a linear map on homology
	\begin{equation}\label{eq:inclusion-map}
		\iota_{q}^{t,t'}
		\;:\;
		\widetilde{H}_q(\Delta^t; k)
		\;\longrightarrow\;
		\widetilde{H}_q(\Delta^{t'}; k).
	\end{equation}
	The \emph{persistent homology group} in degree \(q\) from \(t\) to \(t'\) is 
	defined as 
	\begin{equation}\label{eq:persistent-homology-group}
		\mathrm{Im}\bigl(\iota_q^{t,t'}\bigr) 
		\;=\;
		\iota_{q}^{t,t'}\bigl(\,\widetilde{H}_q(\Delta^t; k)\bigr)
		\;\subseteq\;
		\widetilde{H}_q(\Delta^{t'}; k).
	\end{equation}
	Equivalently, \(\mathrm{Im}(\iota_q^{t,t'})\) measures which homology classes 
	in \(\Delta^t\) remain nontrivial in \(\Delta^{t'}\).  In this sense, 
	\(\mathrm{Im}(\iota_q^{t,t'})\) quantifies the ``lifespan'' of topological 
	features in degree \(q\) across the filtration.
\end{definition}

Persistent homology groups form the foundation for constructing barcodes 
or persistence diagrams, visually representing topological features' birth and death.  These invariants play a central role in 
topological data analysis.

Let \(\Delta\) be a fixed simplicial complex on a vertex set \(V\).  
For any subset \(W \subseteq V\), the \emph{restriction} (or induced subcomplex) 
of \(\Delta\) to \(W\) is denoted \(\Delta_W\).  Suppose 
\((\Delta^t)_{t \in \mathbb{R}}\) is a filtration of \(\Delta\).  Then the 
\emph{induced subcomplex filtration} on \(W\) is given by
\begin{equation}\label{eq:induced-subcomplex-filtration}
	\Delta_W^t
	\;=\;
	\Delta^t \,\cap\, \Delta_W 
	\;\subseteq\;
	\Delta_W^{t'}
	\;=\;
	\Delta^{t'} \,\cap\, \Delta_W 
	\quad \text{whenever} \quad t \le t'.
\end{equation}
Consequently, for \(t \le t'\), there is an induced linear map on homology
\begin{equation}\label{eq:induced-inclusion-map}
	\iota_{q}^{t,t'} 
	\;:\;
	\widetilde{H}_q(\Delta_W^t; k)
	\;\longrightarrow\;
	\widetilde{H}_q(\Delta_W^{t'}; k).
\end{equation}
This allows one to examine persistent homology restricted to subsets \(W\) 
of the vertex set \(V\). Recall that in the non-persistent setting, Hochster's formula expresses the graded Betti numbers \(\beta_{i,j}(k[\Delta])\) of the Stanley--Reisner ring \(k[\Delta]\) in terms of the ranks of the homology groups 
\(\widetilde{H}_{j-i-1}(\Delta_W;k)\).  To adapt this relationship to a 
filtration \((\Delta^t)_{t \in \mathbb{R}}\), one replaces the 
(non-persistent) homology ranks by \emph{persistent homology ranks} of the 
maps
\(\iota_{j-1}^{t,t'} : \widetilde{H}_{j-1}(\Delta_W^t; k) \rightarrow 
\widetilde{H}_{j-1}(\Delta_W^{t'}; k)\).

\begin{definition}[\index{Persistent Stanley--Reisner Graded Betti Numbers}Persistent Stanley--Reisner Graded Betti Numbers]\label{def:persistent-Betti-numbers}
	Let \(\Delta\) be a simplicial complex on the vertex set 
	\(\{x_1,\dots,x_n\}\).  Suppose \((\Delta^t)_{t \in \mathbb{R}}\) is a 
	filtration of \(\Delta\).  For each pair \((t,t')\) with \(t \le t'\), and 
	integers \(i,j \ge 0\), define the persistent Stanley--Reisner graded Betti number to be
	\begin{equation}\label{eq:persistent-Betti}
		\beta_{i, i+j}^{t,t'}\bigl(k[\Delta]\bigr) 
		\;=\;
		\sum_{\substack{W \subseteq \{x_1, \dots, x_n\} \\ |W| = i + j}}
		\dim_k \Bigl(
		\iota_{j-1}^{t,t'} 
		\;:\; 
		\widetilde{H}_{\,j-1}(\Delta_W^t; k) 
		\;\longrightarrow\; 
		\widetilde{H}_{\,j-1}(\Delta_W^{t'}; k)
		\Bigr).
	\end{equation}
	Here, \(\Delta_W^t\) is the induced subcomplex of \(\Delta^t\) restricted to 
	\(W\), and
	\(\iota_{j-1}^{t,t'}\) is the inclusion-induced map in degree \((j-1)\) 
	from \(\Delta_W^t\) to \(\Delta_W^{t'}\).
\end{definition}

The quantity
\[
\dim_{k}\bigl(\iota_{j-1}^{t,t'} : \widetilde{H}_{j-1}(\Delta_W^t; k) 
\;\to\; \widetilde{H}_{j-1}(\Delta_W^{t'}; k)\bigr)
\]
represents the rank of the persistent homology group from \(\Delta_W^t\) to \(\Delta_W^{t'}\).  
The sum in \eqref{eq:persistent-Betti} accumulates these ranks over all subsets \(W\) of the appropriate cardinality, providing an algebraic topological measure of how many \((j-1)\)-dimensional features persist from scale \(t\) to \(t' \).  
Therefore, the persistent Stanley--Reisner graded Betti numbers  \(\beta_{i,i+j}^{t,t'}\bigl(k[\Delta]\bigr)\) in \eqref{eq:persistent-Betti} record the number of homological features of ``dimension" \((j-1)\) (captured by subsets \(W\) of cardinality \(i+j\)) that persist from \(t\) to \(t' \).  This framework extends Hochster's formula to a multiscale setting, permitting barcodes or diagrams that encode the intervals \([t,t']\) over which prime (facet) structures or induced homology groups remain present. 

Persistent Stanley-Reisner graded Betti numbers also include the standard persistent Betti numbers (for instance, \(\beta_{i,|V|}^{t,t'} = \beta_{|V|-i-1}^{t,t'}\)) and incorporate additional invariants by accounting for every level of the simplicial complex.

\subsection{Persistent h-vectors and Persistent f-vectors}

In many geometric and combinatorial settings (for instance, in the study of projective varieties, toric ideals, and Stanley--Reisner rings), knowledge of the graded Betti numbers and the Hilbert polynomial yields detailed information about the structure of a module, including invariants such as the projective dimension and Castelnuovo--Mumford regularity \cite{stanley1996combinatorics, bruns1998cohen}. In particular, the following lemma establishes a connection between the $h$-vector and the graded Betti numbers.
We recall a helpful fact about finitely generated graded modules 
over a polynomial ring, which connects their graded Betti numbers to their 
Hilbert series.

\begin{lemma}[Graded Free Resolution and Hilbert Series]\label{lemma:graded-free-res-Hilb}
	Let \(S\) be a polynomial ring in \(n\) variables over a field \(k\), 
	endowed with the standard \(\mathbb{Z}\)-grading, and let 
	\(\dim(S) = n\).  Suppose \(M\) is a finite graded \(S\)-module of finite 
	projective dimension. Then, there exists a graded free resolution of \(M\)
	\begin{equation}\label{eq:graded-free-res}
		0 
		\;\longrightarrow\;
		\bigoplus_{j} S(-j)^{\beta_{p,j}}
		\;\longrightarrow\;
		\cdots
		\;\longrightarrow\;
		\bigoplus_{j} S(-j)^{\beta_{0,j}}
		\;\longrightarrow\;
		M 
		\;\longrightarrow\;
		0,
	\end{equation}
	whose graded Betti numbers are \(\beta_{i,j}\).  The Hilbert series of \(M\), 
	denoted \(H_M(t)\), then satisfies
	\begin{equation}\label{eq:Hilbert-series-general}
		H_M(s)
		\;=\;
		\frac{Q_M(s)}{\,(1-s)^n},
		\quad\text{where}\quad
		Q_M(s) 
		\;=\;
		\sum_{i,j} (-1)^i\,\beta_{i,j}\,s^j.
	\end{equation}
\end{lemma}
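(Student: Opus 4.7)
The plan is to prove the formula by combining three standard facts about graded modules over the polynomial ring $S$: additivity of the Hilbert series on exact sequences of graded modules, the Hilbert series of a shifted free module $S(-j)$, and the exactness of the graded free resolution of $M$.

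First I would record the computation of the Hilbert series of the building blocks. Since $S = k[x_1,\dots,x_n]$ with $\deg(x_i)=1$, a standard stars-and-bars count gives $H_S(s) = 1/(1-s)^n$, and therefore for any integer shift $j$ we have $H_{S(-j)}(s) = s^j/(1-s)^n$. By direct summation, the free module $F_i := \bigoplus_j S(-j)^{\beta_{i,j}}$ appearing in degree $i$ of the resolution has Hilbert series
\begin{equation*}
H_{F_i}(s) \;=\; \frac{1}{(1-s)^n}\sum_{j}\beta_{i,j}\,s^j.
\end{equation*}

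Next I would invoke the additivity of Hilbert series on short exact sequences of finitely generated graded modules: if $0\to A\to B\to C\to 0$ is exact in each graded degree, then $H_B(s) = H_A(s)+H_C(s)$. This follows coefficientwise from the rank-nullity theorem applied in each fixed internal degree $d$, using the fact that each $M_d$ is a finite-dimensional $k$-vector space (which is guaranteed by finite generation together with finite projective dimension, ensuring the resolution is finite in each degree). Iterating this along the finite resolution — or, equivalently, splitting the resolution into short exact sequences via the syzygy kernels and summing — yields the Euler characteristic identity
\begin{equation*}
H_M(s) \;=\; \sum_{i=0}^{p} (-1)^i\,H_{F_i}(s).
\end{equation*}

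Substituting the formula for $H_{F_i}(s)$ and factoring out $(1-s)^{-n}$ then gives
\begin{equation*}
H_M(s) \;=\; \frac{1}{(1-s)^n}\sum_{i,j}(-1)^i\,\beta_{i,j}\,s^j \;=\; \frac{Q_M(s)}{(1-s)^n},
\end{equation*}
as claimed. There is no real obstacle in this argument; the only point deserving care is the justification of additivity and termwise manipulation of the formal power series, which I would handle by working coefficient-by-coefficient in each internal degree $d$, where the sum $\sum_i (-1)^i \dim_k (F_i)_d$ becomes a finite alternating sum of dimensions of finite-dimensional $k$-vector spaces and equals $\dim_k M_d$ by the exactness of the resolution restricted to degree $d$.
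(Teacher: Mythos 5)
Your proof is correct and is the standard textbook argument for this identity. Note that the paper itself states this lemma as a recalled background fact and supplies no proof, so there is nothing to compare approaches against; your argument via the Hilbert series of $S(-j)$, additivity on short exact sequences obtained by splitting the resolution at its syzygies, and the resulting Euler-characteristic identity is exactly the right way to establish it. One small clarification worth making: finite-dimensionality of each $M_d$ (and of each $(F_i)_d$, and finiteness of each sum $\sum_j \beta_{i,j}s^j$) follows from finite generation alone; finite projective dimension is what bounds the index $i$ by $p$ so that the alternating sum has finitely many terms. You have bundled these two roles together, but the argument is unaffected.
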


When \(M = k[\Delta]\) is the Stanley-Reisner ring of a \((d-1)\)-dimensional 
simplicial complex \(\Delta\), one often writes 
\(\dim\bigl(k[\Delta]\bigr) = d\).  In this case, we have the following 
relation between the polynomial \(Q_{k[\Delta]}(t)\) and the classical 
\(h\)-vector \(\bigl(h_0, h_1, \dots, h_d\bigr)\).  First, observe that
\begin{equation}\label{eq:SR-Hilbert-series}
	(1 - s)^{\,n - d}
	\Bigl(\,\sum_{m=0}^{d} h_m\, s^m \Bigr)
	\;=\;
	\sum_{i,j}\,(-1)^i\,\beta_{i,j}\,s^j,
\end{equation}
where \(n = \dim(S)\).  
Setting
\begin{equation}\label{eq:Bj-def}
	B_j 
	\;:=\; 
	\sum_{i=0}^{p} (-1)^i\, \beta_{i,j},
	\quad j=0,1,\dots,n,
\end{equation}
where $p+1$ is the  length of a minimal free resolution, we may rewrite the polynomial \(Q_{k[\Delta]}(t)\) from 
\eqref{eq:Hilbert-series-general} as
\begin{equation}\label{eq:QkDelta}
	Q_{k[\Delta]}(s)
	\;=\;
	\sum_{j=0}^{\,n} B_j\, s^j.
\end{equation}
Hence, using \eqref{eq:SR-Hilbert-series}, one obtains
\begin{equation}\label{eq:Ht-formula}
	Q_{k[\Delta]}(s)
	\;=\;
	(1 - s)^{\,n-d}\, H(s),
	\quad \text{where} \quad 
	H(s) = \sum_{m=0}^{\,d} h_m\, s^m.
\end{equation}
Inverting this,
\begin{equation}\label{eq:Ht-inversion}
	H(s)
	\;=\;
	(1-s)^{-(n-d)}\,Q_{k[\Delta]}(s).
\end{equation}
Since
\begin{equation}\label{eq:binomial-expansion}
	(1 - s)^{-(n-d)}
	\;=\;
	\sum_{k \,\ge\, 0}
	\binom{\,n-d + k - 1\,}{\,k\,}\, s^k,
\end{equation}
we can extract the coefficients \((h_m)\) of \(H(s)\) by collecting 
appropriate terms of \(Q_{k[\Delta]}(s)\).  In particular,
\begin{equation}\label{eq:h-in-terms-of-B}
	h_m
	\;=\;
	\sum_{j=0}^{\,m} 
	\binom{\,n-d + m - j - 1\,}{\,m-j\,}
	B_j
	\;=\;
	\sum_{j=0}^{\,m}
	\binom{\,n-d + m - j - 1\,}{\,m-j\,}
	\Bigl(\,
	\sum_{i=0}^{\,p}
	(-1)^i \,\beta_{i,j}
	\Bigr).
\end{equation}
Thus, the \(h\)-vector \(\bigl(h_0,\dots,h_d\bigr)\) is expressed directly 
in terms of the alternating sums \(\sum_{i=0}^{p} (-1)^i \beta_{i,j}\).

For a \((d-1)\)-dimensional simplicial complex \(\Delta\), the \emph{$f$-vector} 
is given by
\begin{equation}\label{eq:f-vector-def}
	(f_0, f_1, \dots, f_{d-1}),
\end{equation}
where \(f_i\) denotes the number of \(i\)-dimensional faces of \(\Delta\).  
By convention, we also set \(f_{-1} = 1\) to account for the empty face.  
The $f$-vector and $h$-vector are classically related by
\begin{equation}\label{eq:f-in-terms-of-h}
	\sum_{j=0}^{\,d} h_j \, s^j
	\;=\;
	\sum_{j=0}^{\,d} f_{\,j-1}\,\bigl(1 - s\bigr)^{\,d - j}\, s^j,
	\quad
	\text{with}
	\quad
	f_{-1} = 1.
\end{equation}
Equivalently, one obtains
\begin{equation}\label{eq:f-h-binomial}
	f_{\,j-1}
	\;=\;
	\sum_{i=0}^j
	\binom{\,d - i\,}{\,j - i\,}\; h_i,
	\quad
	j = 0,1,\dots,d.
\end{equation}
Conversely, the $h$-vector can be recovered from the $f$-vector via
\begin{equation}\label{eq:h-in-terms-of-f}
	h_j
	\;=\;
	\sum_{i=0}^{\,j}
	(-1)^{\,j - i}\,\binom{\,d - i\,}{\,j - i\,}\,
	f_{\,i-1},
	\quad
	j = 0,1,\dots,d.
\end{equation}

Combining the binomial relationship \eqref{eq:f-h-binomial} with the 
expression \eqref{eq:h-in-terms-of-B} for each \(h_m\) in terms of the 
graded Betti numbers \(\beta_{\,i,j}\), one obtains a direct formula for 
each \(f_{\,k-1}\) in terms of these Betti numbers:
\begin{equation}\label{eq:f-vector-graded-betti}
	f_{\,k-1}
	\;=\;
	\sum_{i=0}^k 
	\binom{\,d - i\,}{\,k - i\,}
	\sum_{j=0}^{\,i}
	\binom{\,n-d + i - j - 1\,}{\,i - j\,}
	\biggl(\,
	\sum_{r=0}^p 
	(-1)^r \,\beta_{\,r,j}
	\biggr),
	\quad k=0,1,\dots,d.
\end{equation}
Equivalently, one may swap the order of summation in \(i\) and \(j\) to write
\begin{equation}\label{eq:f-vector-graded-betti-alt}
	f_{\,k-1}
	\;=\;
	\sum_{j=0}^k 
	\biggl(\,
	\sum_{r=0}^p 
	(-1)^r \,\beta_{\,r,j}
	\biggr)\,
	\sum_{i=j}^k
	\binom{\,d - i\,}{\,k - i\,}
	\binom{\,n-d + i - j - 1\,}{\,i - j\,}.
\end{equation}
Both \eqref{eq:f-vector-graded-betti} and \eqref{eq:f-vector-graded-betti-alt} 
express the entries of the $f$-vector purely in terms of the graded Betti 
numbers \(\beta_{\,r,j}\).  In this sense, one obtains a complete algebraic 
description of the combinatorial $f$-vector (and hence the $h$-vector) via 
the data encoded in a minimal free resolution of \(k[\Delta]\).

Using \eqref{eq:h-in-terms-of-B} and \eqref{eq:f-h-binomial}, one may extend the definitions of \(f\)-vectors and \(h\)-vectors to multiscale and persistent versions. Specifically, we define the \index{Persistent $h$-vector}persistent $h$-vector to be
\begin{equation}\label{persistence_h}
	h_m^{t,t'}
	\;=\;
	\sum_{j=0}^{m}
	\binom{\,n-d + m - j - 1\,}{\,m-j\,}
	\Bigl(\,
	\sum_{i=0}^{p}
	(-1)^i \,\beta_{i,j}^{t,t'}
	\Bigr),
\end{equation}
and the \index{Persistent $f$-vector}persistent $f$-vector to be
\begin{equation}\label{persistence_f}
	f_{\,m-1}^{t,t'}
	\;=\;
	\sum_{i=0}^m
	\binom{\,d - i\,}{\,m - i\,}\; h_i^{t,t'},
	\quad
	m = 0,1,\dots,d.
\end{equation}

	\begin{example}

	\begin{figure}[h!]
	\centering
	\subfigure[]{
		\includegraphics[width=0.25\textwidth]{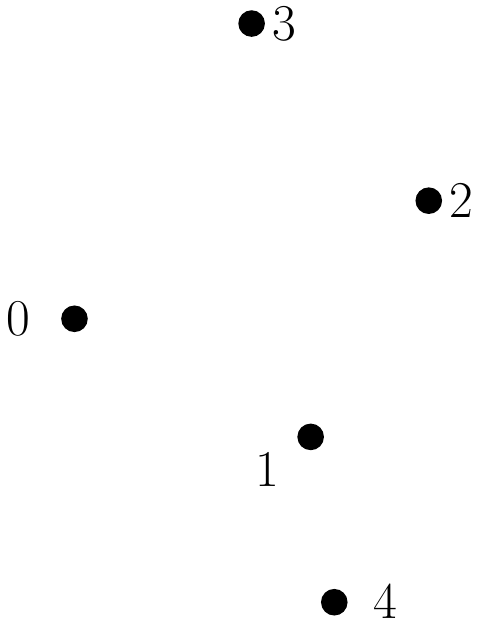} % Path to Figure 1
		\label{fig:pyramid1}
	}
	\subfigure[]{
		\includegraphics[width=0.25\textwidth]{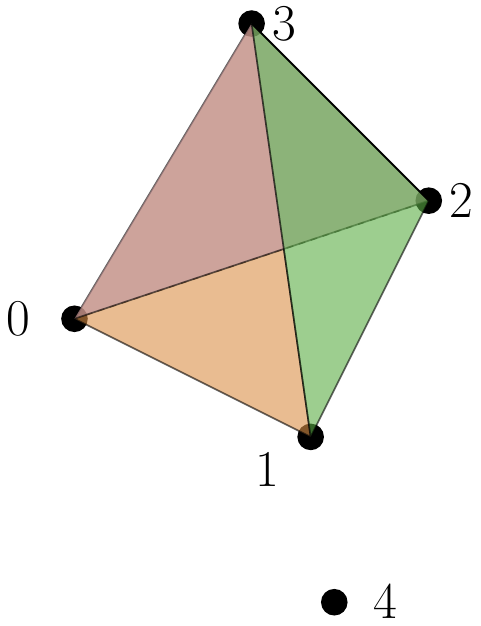} % Path to Figure 2
		\label{fig:pyramid2}
	}
	\subfigure[]{
		\includegraphics[width=0.25\textwidth]{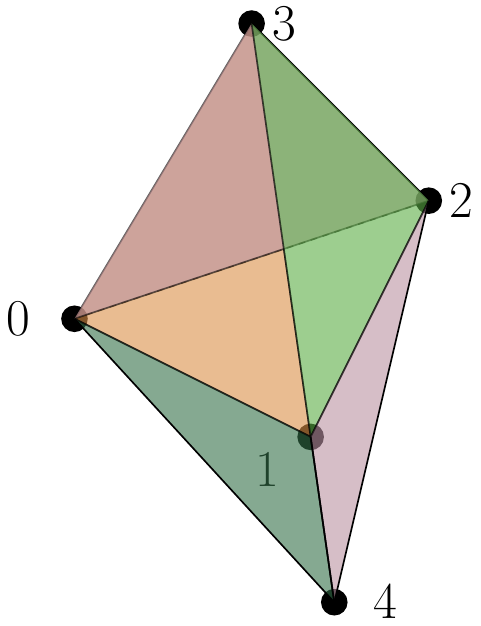} % Path to Figure 2
		\label{fig:pyramid3}
	}
	\caption{Three representative plots of the filtration of a triangular bipyramid with an equatorial triangular cross-section.
	}
	\label{fig:pyramid_filtration}
\end{figure}

In this example, we analyze the triangular bipyramid shown in Figure~\ref{fig:pyramid_filtration}, focusing on its Betti numbers and graded Betti numbers, as well as the corresponding $f$-vectors, $h$-vectors, and their persistent variants. Specifically, we compare $\beta_{0}-1$ (number of connected components minus one), $\beta_{1}$ (loops), and $\beta_{2}$ (voids) to the graded Betti numbers $\beta_{4,5}$ (number of connected components minus one), $\beta_{1,3}$ (loops at the level of three vertices), $\beta_{2,4}$ (loops at the level of four vertices), and $\beta_{2,5}$ (voids). The faces of the complete simplicial complex are:
\[
\begin{aligned}
	&\{x_0\}, \quad \{x_1\}, \quad \{x_2\}, \quad \{x_3\}, \quad \{x_4\}, \\
	&\{x_0, x_1\}, \quad \{x_0, x_2\}, \quad \{x_0, x_3\}, \quad \{x_0, x_4\}, \quad \{x_1, x_2\}, \\
	&\{x_1, x_3\}, \quad \{x_1, x_4\}, \quad \{x_2, x_3\}, \quad \{x_2, x_4\}, \\
	&\{x_0, x_1, x_3\}, \quad \{x_0, x_1, x_4\}, \quad \{x_0, x_2, x_3\}, \quad \{x_0, x_2, x_4\}, \quad \{x_1, x_2, x_3\}, \quad \{x_1, x_2, x_4\}.
\end{aligned}
\]
% where we do not have $\{x_0, x_1, x_2\}$. 
Unlike the standard Betti numbers, the graded Betti numbers capture loops at the three- and four-vertex levels. Figure~\ref{fig:persistence_plot} illustrates that a loop appears and remains persistent at the three-vertex level, indicating a triangular configuration. At the four-vertex level, a loop forms but does not persist for an extended duration, indicating that the loop primarily lives at the three-vertex level. The figure also displays selected components of the $f$-vector and $h$-vector curves, highlighting their persistent counterparts. Table~\ref{tab:betti2} presents the graded Betti numbers at the critical parameter values $0$, $1$, and $2$.

\begin{table}[h!]
	\centering
	\begin{minipage}{0.32\textwidth}
		\centering
		\begin{tabular}{c|ccccc}
			& 0 & 1 & 2 & 3 & 4 \\ 
			\midrule
			0  & 1  & 0  & 0  & 0  & 0  \\
			1  & 0  & 10 & 0  & 0  & 0  \\
			2  & 0  & 0  & 20 & 0  & 0  \\
			3  & 0  & 0  & 0  & 15 & 0  \\
			4  & 0  & 0  & 0  & 0  & 4  \\
		\end{tabular}
		%			\caption{}
		\label{tab:first}
	\end{minipage}%
	\begin{minipage}{0.32\textwidth}
		\centering
		\begin{tabular}{c|ccccc}
			& 0 & 1 & 2 & 3 & 4 \\ 
			\midrule
			0  & 1  & 0  & 0  & 0  & 0  \\
			1  & 0  & 4  & 0  & 0  & 0  \\
			2  & 0  & 1  & 6  & 0  & 0  \\
			3  & 0  & 0  & 1  & 4  & 0  \\
			4  & 0  & 0  & 0  & 0  & 1  \\
		\end{tabular}
		%			\caption{First}
		\label{tab:second}
	\end{minipage}%
	\begin{minipage}{0.32\textwidth}
		\centering
		\begin{tabular}{c|ccccc}
			& 0 & 1 & 2 & 3 & 4 \\ 
			\midrule
			0  & 1  & 0  & 0  & 0  & 0  \\
			1  & 0  & 1  & 0  & 0  & 0  \\
			2  & 0  & 1  & 0  & 0  & 0  \\
			3  & 0  & 0  & 1  & 0  & 0  \\
		\end{tabular}
		%			\caption{First}
		\label{tab:third}
	\end{minipage}
	\caption{From left to right: The betti tables at the critical values of the filtration in Figure \ref{fig:pyramid_filtration}.}
	\label{tab:betti2}
\end{table}

\begin{figure}[h!]
	\centering
	\subfigure{
		\includegraphics[width=0.9\textwidth]{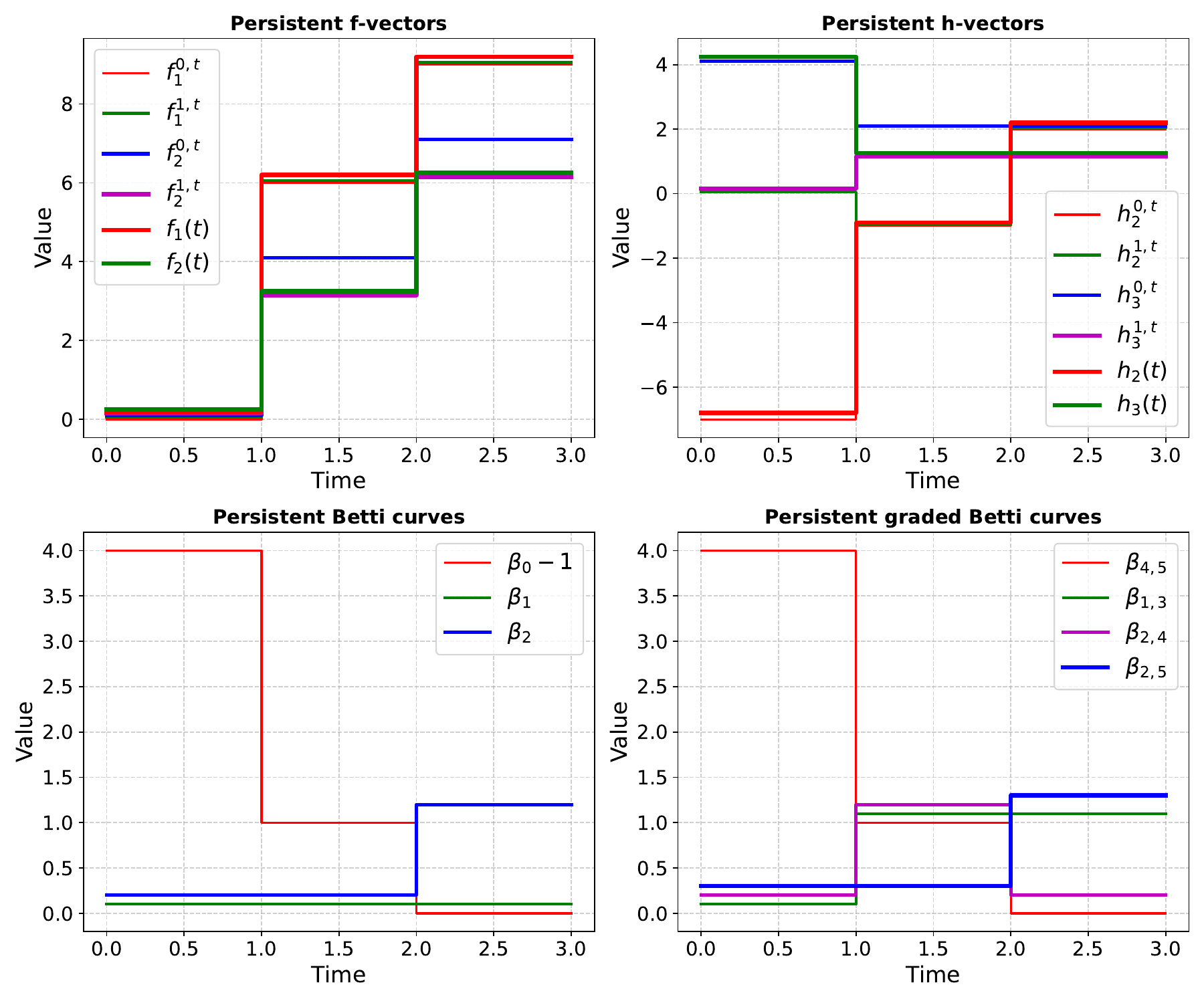} % Path to Figure 1
		\label{fig:persistence}
	}
	\caption{An illustration of the persistent variations of the \( h \)-vectors, \( f \)-vectors, Betti numbers, and graded Betti numbers, respectively. Each curve represents a function that assumes discrete integer values. To enhance the visual clarity of the plots and prevent overlap, a slight increment is added to the curves, ensuring that each curve remains distinguishable within the graphical representation.
	}
	\label{fig:persistence_plot}
\end{figure}
\end{example}

\subsection{Persistent Facet Ideals}

Because Stanley-Reisner ideals are generated by squarefree monomials, they are radical and therefore admit a primary decomposition as an intersection of their minimal prime ideals~\cite{atiyah2018introduction}. These prime ideals reflect combinatorial properties of the associated simplicial complex, as elaborated upon in the following discussion. Before proceeding, we present the following definitions.

\begin{definition}[Prime Monomial Ideals]
Let \(V = \{x_1, x_2, \ldots, x_n\}\) be a finite set of indeterminates over a field \(k\). For each subset \(A \subseteq V\), define the \emph{prime monomial ideal} 
\[
P_A \;:=\; \bigl(x_i \,\mid\, x_i \not \in  A\bigr).
\]
Equivalently, for any prime monomial ideal \(P \subseteq k[V]\), set
\[
W \;:=\; \{\, x_i \in V \mid x_i \notin P \}.
\]
Then \(P = P_W\). In particular, there is a bijection
\[
\{\text{subsets } A \subseteq V\} 
\;\;\longleftrightarrow\;\;
\{\text{prime monomial ideals } P \subseteq k[V]\}
\]
given by \(A \mapsto P_A\) and \(P \mapsto \{\,x_i \in V : x_i \notin P\}\).
\end{definition}

Let $\Delta$ be a simplicial complex. In Stanley–Reisner theory, we focus on prime monomial ideals associated with the facets of $\Delta$, which we shall refer to as facet prime monomial ideals, or simply facet ideals, for reasons that will become clear below.

Let \(I(\Delta)\) be the \emph{Stanley--Reisner ideal} of the simplicial complex \(\Delta\).  By definition, \(I(\Delta)\) is generated by all monomials corresponding to the non-faces of \(\Delta\). It is a radical ideal and factors as the intersection of its minimal prime ideals:
\[
I(\Delta) \;=\; \bigcap_{i} P_i.
\]
Each minimal prime \(P_i\) in this factorization corresponds precisely to a unique facet of \(\Delta\). Precisely,
\[
I(\Delta) \;=\; \bigcap_{\sigma\in \mathcal{F}(\Delta)} P_{\sigma},
\]
where $\mathcal{F}(\Delta)$ denotes the collection of facets of $\Delta$. Now let \((\Delta^t)_{t \in \mathbb{R}}\) be a filtration of \(\Delta\).  As \(t\) varies, the factorization of \(I(\Delta^t)\) into its facet prime monomial ideals evolves in accordance with the reversed inclusion
\[
I(\Delta^{t'}) \;\subseteq\; I(\Delta^t).
\]
We denote by \(\mathcal{P}(\Delta^t)\) the collection of all facet ideals of \(\Delta^t\). For each \(i \geq 0\), let \(\mathcal{P}_i(\Delta^t)\) be the subcollection of \(\mathcal{P}(\Delta^t)\) consisting of the facet ideals corresponding to \(i\)-dimensional facets of \(\Delta^t\). Then, we have the disjoint union
\[
\mathcal{P}(\Delta^t) = \bigsqcup_{i=0}^{\dim(\Delta^t)} \mathcal{P}_i(\Delta^t).
\]
Motivated by this observation, we introduce a notion analogous to the birth and vanishing of topological features from classical persistence theory. Therefore, we call the facet prime monomial ideals of \(I(\Delta^t)\) as the \index{Persistent facet ideals}\emph{persistent facet ideals}.
Before describing this construction, we first present the necessary definitions.
\begin{definition}[\index{Stanley--Reisner Critical Value}Stanley--Reisner Critical Value]\label{def:SR-critical-value}
A \emph{Stanley--Reisner critical value} of $f$ is a real number \(r\) such that, for a sufficiently small \(\varepsilon > 0\),
\begin{equation}
	I\bigl(\Delta_f^{\,r+\varepsilon}\bigr) \;\subsetneq\; I\bigl(\Delta_f^{\,r-\varepsilon}\bigr),
	\label{eq:strict-inclusion}
\end{equation}
i.e., there is a strict inclusion of the corresponding Stanley--Reisner ideals. 
\end{definition}

These critical values indicate where the Stanley-Reisner ideal and its decomposition change, and they also enable a planar representation of \(f\). We now present a typical lemma in this context.

\begin{lemma}[Critical Value Lemma]\label{lem:critical-value}
	If a closed interval \([x,y]\) does not contain a critical value of the filtration \((\Delta^t)_{t \in \mathbb{R}}\), then 
	\begin{equation}
		I\bigl(\Delta^y\bigr) \;=\; I\bigl(\Delta^x\bigr).
		\label{eq:critical-lemma}
	\end{equation}
\end{lemma}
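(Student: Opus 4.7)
I would argue by contraposition: assuming $I(\Delta^x) \neq I(\Delta^y)$, I will produce a Stanley--Reisner critical value of $f$ inside $[x,y]$. Since $f$ is monotonic and $x \le y$, the definition $\Delta^t = \{\sigma \in \Delta : f(\sigma) \le t\}$ yields $\Delta^x \subseteq \Delta^y$; consequently every non-face of $\Delta^y$ is a non-face of $\Delta^x$, so each generator of $I(\Delta^y)$ lies in $I(\Delta^x)$ and $I(\Delta^y) \subseteq I(\Delta^x)$. Assuming the two ideals differ therefore forces the strict containment $I(\Delta^y) \subsetneq I(\Delta^x)$.

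Because both ideals are squarefree monomial ideals, each is determined by the squarefree monomials it contains, namely the monomials $x^{\tau}$ with $\tau$ ranging over the non-faces of the respective complex. Strict inclusion therefore produces a subset $\tau \subseteq V$ with $\tau \notin \Delta^x$ yet $\tau \in \Delta^y$. Equivalently, one could start with any monomial $m \in I(\Delta^x) \setminus I(\Delta^y)$ and choose a minimal non-face $\tau$ of $\Delta^x$ whose squarefree monomial $x^{\tau}$ divides $m$; if such a $\tau$ were also a non-face of $\Delta^y$, then $x^{\tau} \in I(\Delta^y)$ would force $m \in I(\Delta^y)$, a contradiction.

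Set $r := f(\tau)$. The filtration rule $\tau \in \Delta^t \iff f(\tau) \le t$, combined with $\tau \notin \Delta^x$ and $\tau \in \Delta^y$, gives $x < r \le y$, so $r \in [x,y]$. Since the vertex set $V = \{x_1, \dots, x_n\}$ is finite, $\Delta$ is finite and $f(\Delta) \subset \mathbb{R}$ is a discrete set; I may therefore pick $\varepsilon > 0$ small enough that $(r-\varepsilon, r+\varepsilon) \cap f(\Delta) = \{r\}$. For such $\varepsilon$, $\tau \in \Delta^{r+\varepsilon} \setminus \Delta^{r-\varepsilon}$, and hence $x^{\tau} \in I(\Delta^{r-\varepsilon}) \setminus I(\Delta^{r+\varepsilon})$, which witnesses $I(\Delta^{r+\varepsilon}) \subsetneq I(\Delta^{r-\varepsilon})$. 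Thus $r$ is a Stanley--Reisner critical value of $f$ lying in $[x,y]$, contradicting the hypothesis.

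I anticipate no serious obstacle. The only step requiring care is the passage from strict ideal inclusion to the existence of a simplex in $\Delta^y \setminus \Delta^x$; this rests on the standard dictionary between squarefree monomial ideals and simplicial complexes, together with the fact that a monomial ideal is determined by the monomials it contains. Finiteness of $\Delta$ then furnishes the separation in $f(\Delta)$ needed to select the witnessing $\varepsilon$.
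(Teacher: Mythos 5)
Your proof is correct, and it takes a genuinely different route from the paper. The paper argues directly: starting from the point $x$, it shows (using that no point of $[x,y]$ is critical) that the set $\{r \in [x,y] : I(\Delta^x) = I(\Delta^r)\}$ has supremum $\alpha$ attained with $I(\Delta^x) = I(\Delta^\alpha)$, and then deduces $\alpha = y$ — a connectedness-style argument that never names the offending simplex. You instead argue by contraposition and construct the critical value explicitly: from $I(\Delta^y) \subsetneq I(\Delta^x)$ you extract a simplex $\tau \in \Delta^y \setminus \Delta^x$ (using that a squarefree monomial ideal is determined by its squarefree monomials, and that a minimal non-face of $\Delta^x$ dividing a witness monomial must be a face of $\Delta^y$), set $r = f(\tau) \in (x,y]$, and use the finiteness of $f(\Delta)$ to isolate $r$ and verify the strict inclusion $I(\Delta^{r+\varepsilon}) \subsetneq I(\Delta^{r-\varepsilon})$. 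Your version is more constructive — it identifies the mechanism that makes a value critical (the $f$-value of a newly admitted simplex) rather than proving the locus of equality is closed and open — and it makes the reliance on finiteness of $\Delta$ explicit, something the paper's supremum argument leaves implicit. The paper's argument, in exchange, does not need to unwind the monomial-ideal dictionary at all. Both are valid; yours is arguably the more transparent of the two.
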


\begin{proof}
	The case $x = y$ is trivial, so assume $x < y$. Set $J = [x,y]$ and note that $J$ contains no critical values by assumption.  
	Hence, there exists \(\epsilon > 0\) such that \(x + \epsilon < y\) and
	\begin{equation}
		I\bigl(\Delta^x\bigr) \;=\; I\bigl(\Delta^r\bigr)
		\quad \text{for all } x \le r \le x + \epsilon.
		\label{eq:interval-equality}
	\end{equation}
	Define
	\begin{equation}
		\alpha \;=\; \sup\Bigl\{\,r \,\Big\vert\, I\bigl(\Delta^x\bigr) = I\bigl(\Delta^r\bigr),\, x \le r \le y\Bigr\}.
		\label{eq:sup-alpha}
	\end{equation}
	Since $x < \alpha$ and $\alpha$ is not a critical value, it follows (by the definition of $\alpha$) that 
	\begin{equation}
		I\bigl(\Delta^x\bigr) \;=\; I\bigl(\Delta^\alpha\bigr).
		\label{eq:equal-at-alpha}
	\end{equation}
	We claim that $\alpha = y$. Indeed, if $\alpha < y$, then by choice of $\alpha$ (and the fact that $\alpha$ is not a critical value), we obtain a contradiction to its definition as a supremum where \(I(\Delta^x) = I(\Delta^r)\). Hence, \(\alpha = y\), and the result in Eq. \eqref{eq:critical-lemma} follows.
\end{proof}

We now introduce the birth and death indices to track the emergence and disappearance of prime monomial ideals in filtration.
\begin{definition}[Birth, Death, and Lifespan of Persistent Facet Ideals]
	\label{def:birth-death}
	Let \(\sigma \in \Delta\) be a face, and let \(P_{\sigma}\) be the corresponding prime monomial ideal. Consider the filtration \((\Delta^t)_{t\in\mathbb{R}}\). 
	\begin{enumerate}
		\item The \emph{birth} of \(P_{\sigma}\) is the smallest real number \(b\) such that \(P_{\sigma}\) is a minimal prime over \(I(\Delta^b)\) (equivalently, corresponds to a facet of \(\Delta^b\)), and for every sufficiently small \(\varepsilon > 0\), \(P_{\sigma}\) is not a minimal prime over \(I(\Delta^{\,b-\varepsilon})\).  
		If no finite \(b\) satisfies these conditions, we say the birth is \emph{at infinity} and set \(b = \infty\). We denote this birth index by \(b_P\).
		
		\item The \emph{death} of \(P_{\sigma}\) is the largest real number \(d\) such that \(P_{\sigma}\) is \emph{not} a minimal prime over \(I(\Delta^d)\) but becomes minimal over \(I(\Delta^{\,d+\varepsilon})\) for all sufficiently small \(\varepsilon > 0\).  
		If no finite \(d\) satisfies these conditions, we say the death is \emph{at infinity} and set \(d = \infty\). We denote this death index by \(d_P\).
		
		\item The \emph{lifespan} of \(P_{\sigma}\) is the (possibly unbounded) interval
		\[
		[b_P,\,d_P),
		\]
		where \(b_P\) and \(d_P\) are the birth and death indices of \(P_{\sigma}\) respectively.
	\end{enumerate}
\end{definition}

\noindent
Thus, much like in persistent homology, we obtain a \emph{barcode} representation for the persistent facet ideals: each facet prime \(P_{\sigma}\) appears (is born) at some parameter value \(b\) and disappears (dies) at some parameter value \(d\).  

Critical values correspond to points where the decomposition of the persistent Stanley-Reisner ideal changes. Each critical value is associated with the appearance or disappearance of a persistent facet ideal in the primary decomposition of the persistent Stanley-Reisner ideal. Conversely, every change, e.g., the birth and death indices, occurs at a critical value.
Let $t \le t'$ and define the \index{Facet persistent Betti number}\emph{facet persistent Betti number} \(\beta_t^{t'}\) to be the number of minimal primes over \(I(\Delta^t)\) that remain minimal over \(I(\Delta^{t'})\).  
Let \(\alpha_1 < \alpha_2 < \cdots < \alpha_n\) be the Stanley--Reisner critical values of \((\Delta^t)_{t \in \mathbb{R}}\).  
Set \(\alpha_{0} = -\infty\) and \(\alpha_{n+1} = +\infty\).  
Consider an interleaved sequence \(\bigl(b_i\bigr)_{i=-1,0,1,\dots,n}\) such that \(b_{i-1} < \alpha_i < b_i\) for all \(i\), with \(b_{-1} = -\infty\) and \(b_{n+1} = +\infty\).  
For two integers \(i,j\) with \(0 \le i < j \le n+1\), we define the \emph{multiplicity} of the pair \((\alpha_i, \alpha_j)\) by
\begin{equation}
	\mu_i^j 
	\;=\; 
	\beta^{b_j}_{b_{\,i-1}} 
	\;-\; 
	\beta^{b_j}_{b_i} 
	\;+\; 
	\beta^{b_{\,j-1}}_{b_i} 
	\;-\; 
	\beta^{b_{\,j-1}}_{b_{\,i-1}}.
	\label{eq:mu-definition}
\end{equation}
\begin{definition}
The \index{Facet persistence diagram}\emph{facet persistence diagram} of the Stanley--Reisner critical values, denoted
\[
D(f) = D\bigl((\Delta^t)_{t \in \mathbb{R}}\bigr) \;\subset\; \overline{\mathbb{R}}^2,
\]
is then the set of points \((\alpha_i,\alpha_j)\), counted with multiplicity \(\mu_i^j\) for \(0 \le i < j \le n+1\), together with all points on the diagonal, each counted with infinite multiplicity. 
\end{definition}
We aim to prove that for two monotonic functions \(f,g\colon \Delta \to \mathbb{R}\),
\begin{equation}
	d_b\Bigl(D\bigl((\Delta_f^t)\bigr),\,D\bigl((\Delta_g^t)\bigr)\Bigr)
	\;\le\; 
	\|f - g\|_{\infty},
	\label{eq:bottleneck-inequality}
\end{equation}
where \(d_b\) is the \emph{bottleneck distance} (defined in later parts of the article). Observe that the multiplicity \(\mu_i^j\) can also be written as
\begin{equation}
	\mu_i^j
	\;=\;
	\Bigl(
	\beta^{b_{\,j-1}}_{\,b_i} 
	\;-\; 
	\beta^{b_{\,j-1}}_{\,b_{\,i-1}}
	\Bigr)
	\;-\;
	\Bigl(
	\beta^{b_j}_{\,b_i} 
	\;-\; 
	\beta^{b_j}_{\,b_{\,i-1}}
	\Bigr).
	\label{eq:mu-rewritten}
\end{equation}
In this formulation, the difference
\[
\beta_{\,b_{i}}^{\,b_{j-1}}
\;-\;
\beta_{\,b_{i-1}}^{\,b_{j-1}}
\]
precisely counts the minimal primes over 
\(\displaystyle I\bigl(\Delta^{b_{\,i}}\bigr)\)
that were \emph{not} minimal over 
\(\displaystyle I\bigl(\Delta^{b_{\,i-1}}\bigr)\)
and that remain minimal up to 
\(\displaystyle I\bigl(\Delta^{b_{\,j-1}}\bigr)\).
Since the sequence \((\,b_i)\) is interleaved so that
\(\,b_{\,i-1} < \alpha_i < b_{\,i},\)
and the \(\alpha_i\) are precisely the critical values, 
these newly ``born'' minimal primes do not appear at any parameter value smaller than \(b_{\, i}\). Hence, the quantity 
\(\mu_i^j\)
measures the number of minimal primes that emerge by \(b_{\,i}\) and then cease to be minimal by \(b_{\,j-1}\).  
This observation motivates the subsequent definitions and the stability analysis for the bottleneck distance.

\begin{definition}[\index{Facet Persistence Barcodes}Facet Persistence Barcodes]
	Let \(\widetilde{f} = (\Delta^t)_{t \in \mathbb{R}}\) be a filtration. The \emph{barcode} \(\mathcal{B}(f)\), or simply \(\mathcal{B}\), associated to \(\widetilde{f}\) is the multiset of horizontal line segments
	\[
	[b_P,\, d_P)
	\]
	in the plane, each corresponding to the birth--death interval of a prime monomial ideal that becomes minimal over some \(I(\Delta^t)\) in the filtration. $\mathcal{B}$ is called \emph{Facet Persistence Barcodes} of $\widetilde{f}$.
	
	Let
	\[
	\overline{\mathcal{H}} \;=\; \{(p,q)\,\mid\, -\infty \leq p \leq q \leq +\infty\}
	\]
	denote the \emph{extended half-plane}.
	The \emph{diagram} of the filtration, denoted
	\[
	\mathrm{dgm}(f) \;=\; \mathrm{dgm}(\widetilde{f}) 
	\;=\; \mathrm{dgm}\bigl((\Delta^t)_{t \in \mathbb{R}}\bigr),
	\]
	is the set of points
	\[
	\{(b_{\sigma}, d_{\sigma})\}\,\cup\,\{(p,p)\,\mid\, p \in \mathbb{R}\}
	\;\subset\; \overline{\mathcal{H}},
	\]
	often drawn together with the line segments \([b_{\sigma}, d_{\sigma}]\) in the extended plane.
\end{definition}
Notice that then,
\[
\mathrm{dgm}(f) = D(f).
\]

	\subsubsection{Facet Persistence Module and Stability}

Given a filtration \(\widetilde{f} = (\Delta^t)_{t \in \mathbb{R}}\) and the induced family of ideals \((I(\Delta^t))_{t \in \mathbb{R}}\), we define a persistence module as follows.

\begin{definition}[\index{Facet Persistence Module}Facet Persistence Module]
	\label{def:SR-prime-PM}
	Let \(\Delta\) be a simplicial complex, and for each \(t \in \mathbb{R}\), let $\mathcal{P}(\Delta^t)$ be the collection of the persistent facet ideals of the persistent Stanley--Reisner ideal \(I(\Delta^t)\). Define the vector space
	\[
	V_t \;=\; \bigoplus_{P \in \mathcal{P}(\Delta^t)} k,
	\]
	where each direct summand \(k\) is spanned by a basis vector corresponding to \(P \in \mathcal{P}(\Delta^t)\).
	
	For each pair \((s,t)\) with \(s \le t\), the linear map
	\[
	v^{\,s}_{\,t} \;:\; V_s \;\longrightarrow\; V_t
	\]
	is defined by sending a basis vector corresponding to \(P \in \mathcal{P}(\Delta^s)\) to the same basis vector in \(V_t\) if \(P\) is in $P \in \mathcal{P}(\Delta^t)$, and to \(0\) otherwise.
	
	In other words, $V_t$ is simply spanned by the minimal primes over the persistent Stanley--Reisner ideal $I(\Delta^t)$, and the linear map \(v^{\,s}_{\,t}\)
	is defined by sending a basis vector corresponding to a minimal prime$P$ over $I(\Delta^s)$ to itself if \(P\) remains a minimal prime of \(I(\Delta^t)\), and to \(0\) otherwise.
	
	The resulting collection of vector spaces \((V_t)_{t \in \mathbb{R}}\) and linear maps \((v^{\,s}_{\,t})_{s \le t}\) forms a persistence module, denoted by
	\[
	\mathbb{V}^f 
	\;=\; 
	\Bigl( (V_t)_{t \in \mathbb{R}}, \bigl(v^{\,s}_{\,t}\bigr)_{s \le t} \Bigr),
	\]
	which we call the \emph{facet persistence module} of $\widetilde{f}$.
\end{definition}

By construction, notice that $\beta_t^{t'}= \text{rank}(v_t^{t'})$. Since each \(V_t\) is a finite-dimensional \(k\)-vector space for every \(t \in \mathbb{R}\), the persistence module \(\mathbb{V}^f\) is \(q\)-tame.  Consequently, by the structure theorem for \(q\)-tame persistence modules, the persistence module can be decomposed as a direct sum of interval modules \cite{chazal2016structure}. We state the explicit form of this decomposition below. First, we recall the definition of the interval modules as defined in \cite{chazal2016structure}.

\begin{definition}[Interval Persistence Module]
	Let \(I\) be an interval of \(\mathbb{R}\). The \emph{interval persistence module} \(\mathbf{k}^I\) over \(\mathbb{R}\) is
	defined by 
	\[
	(\mathbf{k}^I)_t \;=\; \begin{cases}
		k & \text{if } t\in I,\\
		0 & \text{otherwise}.
	\end{cases}
	\]
	The structure maps \(\bigl(\mathbf{k}^I\bigr)_t \to \bigl(\mathbf{k}^I\bigr)_{t'}\) are the identity on \(k\) whenever \(t,\, t' \in I\), and zero otherwise.
\end{definition}

When the interval \( I \) is given explicitly—for example, \( I = [a, b) \)—we simplify notation by writing \( \mathbf{k}^I = \mathbf{k}[a, b) \). The main result and the corresponding explicit decomposition are presented below.

\begin{theorem}\cite{chazal2016structure}[Decomposition of Facet Persistence Modules]
	\label{thm:SR-module-decomp}
	Using the notation from Definition~\ref{def:SR-prime-PM}, we have
	\[
	\mathbb{V}^f 
	\;\cong\; 
	\bigoplus_{[b_{P},\,d_{P}) \,\in\, \mathcal{B}}
	\mathbf{k}[b_{P},\, d_{P}),
	\]
	where \([b_{P},\, d_{P})\) denotes the birth--death interval of the persistent facet ideal \(P\).  In particular, each  persistent facet ideal \(P\) contributes a single interval module \(k[b_{P},\, d_{P})\), and
	\[
	P \in \mathcal{P}(\Delta^t)
	\quad\Longleftrightarrow\quad
	t \in [b_{P},\, d_{P}).
	\]
\end{theorem}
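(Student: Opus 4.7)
The plan is to give a direct combinatorial proof that constructs the decomposition explicitly, rather than routing only through the general Krull–Schmidt-type structure theorem. Since \(\Delta\) has finitely many faces, each \(V_t\) is finite-dimensional, so \(\mathbb{V}^f\) is pointwise finite dimensional and therefore q-tame; the cited result of Chazal et al.\ already guarantees a decomposition into interval modules. The content to verify is the identification of those intervals with the lifespans \([b_P, d_P)\) of Definition~\ref{def:birth-death} and the equivalence \(P \in \mathcal{P}(\Delta^t) \Longleftrightarrow t \in [b_P, d_P)\).

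The key lemma is that for each face \(\sigma \in \Delta\), the support
\[
\mathrm{Supp}(\sigma) \;=\; \bigl\{\,t \in \mathbb{R} \;:\; P_\sigma \in \mathcal{P}(\Delta^t)\,\bigr\}
\]
is either empty or exactly the half-open interval \([b_{P_\sigma}, d_{P_\sigma})\). Set \(c_\sigma = \inf\{\,f(\tau) : \tau \in \Delta,\; \tau \supsetneq \sigma\,\}\), with \(c_\sigma = \infty\) if no proper superset exists. Monotonicity of \(f\) forces two facts: (i) \(\sigma \in \Delta^t\) iff \(f(\sigma) \leq t\); and (ii) once a \(\tau \supsetneq \sigma\) appears in \(\Delta^t\), it remains in \(\Delta^{t'}\) for all \(t' \geq t\). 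Hence \(\sigma\) is a facet of \(\Delta^t\) precisely when \(f(\sigma) \leq t\) and no proper superset of \(\sigma\) has entered by parameter \(t\), pinning down \(\mathrm{Supp}(\sigma) = [f(\sigma),\, c_\sigma)\). A direct match against Definition~\ref{def:birth-death} then gives \(b_{P_\sigma} = f(\sigma)\) and \(d_{P_\sigma} = c_\sigma\). The crucial consequence is that \(\mathrm{Supp}(\sigma)\) cannot split into multiple intervals: once \(\sigma\) loses facet status, it can never regain it, because the filtration only adds simplices.

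With the lemma in hand, the decomposition is manifest. For each face \(\sigma\) with nonempty lifespan, let \(M_\sigma \subseteq \mathbb{V}^f\) be the sub-persistence module whose parameter-\(t\) fiber is the one-dimensional subspace spanned by the basis vector indexed by \(P_\sigma\) when \(t \in [b_{P_\sigma}, d_{P_\sigma})\), and zero elsewhere. The structure maps \(v^{s}_{t}\) of \(\mathbb{V}^f\) restrict to the identity on \(M_\sigma\) throughout the lifespan and to zero upon exit, so \(M_\sigma \cong \mathbf{k}[b_{P_\sigma}, d_{P_\sigma})\). Because \(V_t\) was defined as a direct sum indexed by \(\mathcal{P}(\Delta^t)\) and each \(P \in \mathcal{P}(\Delta^t)\) equals \(P_\sigma\) for a unique facet \(\sigma\) of \(\Delta^t\), we have \(V_t = \bigoplus_\sigma (M_\sigma)_t\) compatibly with all transition maps, yielding the claimed \(\mathbb{V}^f \cong \bigoplus_\sigma \mathbf{k}[b_{P_\sigma}, d_{P_\sigma})\).

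I expect the principal technical subtlety to be the behavior at the endpoints. Closedness on the left at \(b_{P_\sigma} = f(\sigma)\) follows immediately since \(\sigma \in \Delta^{f(\sigma)}\). Openness on the right at \(d_{P_\sigma} = c_\sigma\) requires that at \(t = c_\sigma\) some proper superset of \(\sigma\) has in fact already entered the filtration; this holds whenever the infimum \(c_\sigma\) is attained, which is automatic for a finite simplex-indexed set of values \(f(\tau)\). One mild degeneracy deserves note: if \(c_\sigma = f(\sigma)\), then \(\sigma\) is eclipsed the instant it appears, \(\mathrm{Supp}(\sigma)\) is empty, and \(P_\sigma\) contributes no summand, consistent with its absence from \(\mathcal{B}\).
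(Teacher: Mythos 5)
Your proposal is correct, and it strengthens the paper's sketch in a useful way. The paper's argument rests on the observation that a facet prime \(P\) is ``active'' in \(V_t\) exactly for \(t\in[b_P,d_P)\), and then defers to ``standard arguments in the classification of persistence modules over totally ordered sets'' to conclude. You make the same observation but turn it into a fully explicit decomposition: by identifying \(\mathrm{Supp}(\sigma)=[f(\sigma),\,c_\sigma)\) with \(c_\sigma=\min\{f(\tau):\tau\supsetneq\sigma\}\), you pin down the birth and death indices concretely, prove the support is a single interval (monotonicity of \(f\) plus the fact that filtrations only add simplices means a face, once eclipsed, stays eclipsed), and then directly exhibit the submodules \(M_\sigma\) summing to \(\mathbb{V}^f\). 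This makes the appeal to the \(q\)-tame structure theorem a sanity check rather than a load-bearing step: since \(V_t\) is by definition a direct sum over \(\mathcal{P}(\Delta^t)\) and each summand tracks a single face \(\sigma\), the transition maps automatically respect the splitting. The one thing worth flagging is that your proof, like the paper's, depends on finiteness of \(\Delta\) (implicit throughout the paper via the finite vertex set \(\{x_1,\dots,x_n\}\)) to ensure \(c_\sigma\) is attained and the right endpoint is genuinely open; you note this correctly. You also correctly handle the degenerate case \(c_\sigma=f(\sigma)\), where \(\sigma\) never appears as a facet and contributes no bar. One very minor discrepancy: your identification \(b_{P_\sigma}=f(\sigma)\), \(d_{P_\sigma}=c_\sigma\) is the natural reading, but the paper's Definition~\ref{def:birth-death} is worded awkwardly for the death index (it literally says \(P_\sigma\) is \emph{not} minimal at \(d\) but becomes minimal at \(d+\varepsilon\), which as written would describe a pre-birth moment); your interpretation matches what the paper clearly intends given the lifespan interval \([b_P,d_P)\) and the multiplicity formula~\eqref{eq:mu-definition}.
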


\begin{proof}[Sketch of Proof]
	By definition, a prime \(P\) lies in \(\mathcal{P}(\Delta^t)\) if and only if it is minimal over \(I(\Delta^t)\).  The birth--death indices \(b_{P}\) and \(d_{P}\) capture exactly the range of \(t\)-values for which \(P\) remains minimal over \(I(\Delta^t)\).  In other words, \(P\) is active in the module \(V_t\) precisely for \(t \in [b_{P}, d_{P})\).  Standard arguments in the classification of persistence modules over totally ordered sets (see, e.g., interval decompositions in \cite{chazal2016structure}) then imply the stated direct sum decomposition.
\end{proof}

\noindent
Hence, one obtains 
\(\mathrm{dgm}\bigl(\mathbb{V}^f\bigr) =  \mathrm{dgm}\bigl(f\bigr)\).
Let \(\mathbb{U} = ( (U_t)_{t \in \mathbb{R}}, (uchrom^{\,s}_{\,t})_{s \le t} )\) and 
\(\mathbb{V} = ( (V_t)_{t \in \mathbb{R}}, (v^{\,s}_{\,t})_{s \le t} )\) 
be persistence modules over \(\mathbb{R}\). Recall that each \(U_t\) and \(V_t\) is a vector space (or module), and 
\[ u_s^t : U_s \longrightarrow U_t, \quad v_s^t : V_s \longrightarrow V_t \]
are linear maps (the \emph{structure maps}) defined for all \(s \le t\).

\begin{definition}\cite{chazal2016structure}[Homomorphism of Degree \(\delta\)]
	\label{def:degree-delta-hom}
	Let \(\delta \in \mathbb{R}\). A \emph{homomorphism of degree \(\delta\)} from \(\mathbb{U}\) to \(\mathbb{V}\) is a collection of linear maps 
	\[
	\varphi_t : U_t \;\longrightarrow\; V_{t+\delta}
	\]
	for every \(t \in \mathbb{R}\), such that for all \(s \le t\) the following diagram commutes:
	\[
	\begin{tikzcd}
		U_s \arrow[r, "u_s^t"] \arrow[d, "\varphi_s"'] & U_t \arrow[d, "\varphi_t"] \\
		V_{s+\delta} \arrow[r, "v_{s+\delta}^{t+\delta}"'] & V_{t+\delta}.
	\end{tikzcd}
	\]
	We write \(\Phi \in \mathrm{Hom}^\delta(\mathbb{U}, \mathbb{V})\) if \(\Phi = (\varphi_t)_{t\in\mathbb{R}}\) is such a degree-\(\delta\) homomorphism.
\end{definition}

\begin{definition}\cite{chazal2016structure}[\(\delta\)-Interleaving]
	\label{def:delta-interleaving}
	Let \(\delta \ge 0\) and consider two persistence modules \(\mathbb{U}\) and \(\mathbb{V}\). We say \(\mathbb{U}\) and \(\mathbb{V}\) are \(\delta\)-\emph{interleaved} if there exist homomorphisms
	\[
	\Phi \;=\; (\varphi_t)_{t\in\mathbb{R}} \;\in\; \mathrm{Hom}^\delta(\mathbb{U}, \mathbb{V}), 
	\quad
	\Psi \;=\; (\psi_t)_{t\in\mathbb{R}} \;\in\; \mathrm{Hom}^\delta(\mathbb{V}, \mathbb{U}),
	\]
	such that 
	\[
	\Psi \circ \Phi \;=\; 1_\mathbb{U}^{2\delta}, 
	\quad 
	\Phi \circ \Psi \;=\; 1_\mathbb{V}^{2\delta}.
	\]
	Concretely, this means there are maps
	\[
	\varphi_t : U_t \to V_{t+\delta} 
	\quad\text{and}\quad 
	\psi_t : V_t \to U_{t+\delta}
	\]
	for all \(t \in \mathbb{R}\), satisfying the appropriate commutative diagrams that ensure the structure of each persistence module is respected. In particular, one obtains:
	\[
	\begin{tikzcd}
		U_s \arrow[r, "u_s^t"] \arrow[d, "\varphi_s"'] & U_t \arrow[d, "\varphi_t"] \\
		V_{s+\delta} \arrow[r, "v_{s+\delta}^{t+\delta}"'] & V_{t+\delta}
	\end{tikzcd}
	\quad
\begin{tikzcd}[row sep=2.5em, column sep=1.5em]
	U_{s-\delta} \arrow[rr, "u_{s+\delta}^{s-\delta}"] \arrow[dr, "\varphi_{s-\delta}"'] & & U_{s+\delta} \\
	& V_s \arrow[ur, "\psi_s"']
\end{tikzcd}
	\]
	\[
	\begin{tikzcd}
		V_s \arrow[r, "v_s^t"] \arrow[d, "\psi_s"'] & V_t \arrow[d, "\psi_t"] \\
		U_{s+\delta} \arrow[r, "u_{s+\delta}^{t+\delta}"'] & U_{t+\delta}
	\end{tikzcd}
	\quad
\begin{tikzcd}[row sep=2.5em, column sep=1.5em]
	V_{s-\delta} \arrow[rr, "v_{s+\delta}^{s-\delta}"] \arrow[dr, "\psi_{s-\delta}"'] & & V_{s+\delta} \\
	& U_s \arrow[ur, "\varphi_s"']
\end{tikzcd}
	\]
	which guarantee \(\mathbb{U}\) and \(\mathbb{V}\) are interleaved by \(\delta\).
\end{definition}

The \index{Interleaving distance}\emph{interleaving distance} \(d_i(\mathbb{U}, \mathbb{V})\) between two persistence modules \(\mathbb{U}\) and \(\mathbb{V}\) is then defined by
\[
d_i(\mathbb{U}, \mathbb{V}) \;=\; \inf \{\delta \,\mid\, \mathbb{U}\text{ and }\mathbb{V}\text{ are }\delta\text{-interleaved}\}.
\]

We begin by introducing a distance on point sets in the extended half-plane, which will be used to define the bottleneck distance. Recall that 
\[
\mathcal{H} \;=\; \{(p,q) \mid p \leq q\},
\]
which captures all pairs \((p,q)\) satisfying \(p \leq q\). 

Next, we extend \(\mathcal{H}\) by allowing coordinates to take the values \(-\infty\) and \(+\infty\).  Specifically, the \emph{extended half-plane} \(\overline{\mathcal{H}}\) is decomposed into the disjoint union 
\[
\overline{\mathcal{H}} 
\;=\;
\mathcal{H}
\;\cup\;
\bigl(\{-\infty\}\times\mathbb{R}\bigr)
\;\cup\;
\bigl(\mathbb{R}\times\{+\infty\}\bigr)
\;\cup\;
\bigl\{(-\infty,+\infty)\bigr\}.
\]
Each component encodes a different boundary or interior region of the extended plane. 

We now define a distance \(d^\infty\) on \(\overline{\mathcal{H}}\). First, for any points \((p,q)\) and \((r,s)\) in \(\mathcal{H}\), set 
\[
d^\infty\bigl((p,q),(r,s)\bigr) 
\;=\; 
\max\{|p - r|, |q - s|\}.
\]
This measures the usual \(\ell^\infty\)-distance when both points lie strictly within \(\mathcal{H}\). 

We next consider points lying on the extended boundaries. For any \(q, s \in \mathbb{R}\), define
\[
d^\infty\bigl((-\infty,q),(-\infty,s)\bigr)
\;=\;
|q - s|,
\quad
d^\infty\bigl((q,+\infty),(s,+\infty)\bigr)
\;=\;
|q - s|,
\]
and
\[
d^\infty\bigl((-\infty,+\infty),(-\infty,+\infty)\bigr)
\;=\;
0.
\]
Finally, to ensure that points in different components of \(\overline{\mathcal{H}}\) remain far apart, we set 
\(
d^\infty(u,v) \;=\; +\infty 
\)
whenever 
\(
u,v
\)
lie in different subsets of the above decomposition.
This completes the definition of the distance \(d^\infty\), which establishes a natural way to measure proximity in the extended half-plane.

Let 
\(
\Delta(\mathbb{R}^2)
\;=\;
\{(p,p)\,\mid\,p\in\mathbb{R}\}
\)
be the diagonal in the plane. For a point \((p,q)\) with \(p < q\), the relevant distance from \((p,q)\) to the diagonal in the \(\ell^\infty\)-metric is commonly defined as
\[
d^\infty\bigl((p,q),\Delta\bigr)
\;=\;
\frac{1}{2}\,(q - p).
\]

Let \(A\) and \(B\) be two (multi)sets of points in the extended plane \(\overline{\mathcal{H}}\).

\begin{definition}\cite{chazal2016structure}[\index{Partial Matching}Partial Matching]
	A \emph{partial matching} between \(A\) and \(B\) is a subset 
	\[
	M \;\subseteq\; A \times B
	\]
	such that each \(\alpha \in A\) is paired with at most one \(\beta \in B\) and each \(\beta \in B\) is paired with at most one \(\alpha \in A\). Equivalently, \((\alpha,\beta), (\alpha,\beta')\in M\) implies \(\beta=\beta'\), and similarly for points of \(B\).
\end{definition}

%\begin{definition}[\(\delta\)-Matching]
	A partial matching \(M \subseteq A \times B\) is called a \(\delta\)\emph{-matching} if
	\begin{enumerate}
		\item For every \((\alpha, \beta) \in M\), \(d^\infty(\alpha, \beta) \le \delta\).
		\item If \(\alpha \in A\) is unmatched, then \(d^\infty(\alpha, \Delta) \le \delta\).
		\item If \(\beta \in B\) is unmatched, then \(d^\infty(\beta, \Delta) \le \delta\).
	\end{enumerate}
%\end{definition}

We are now prepared to formally recall the bottleneck distance.
\begin{definition}[\index{Bottleneck Distance}Bottleneck Distance]
	\label{def:bottleneck}
	The \emph{bottleneck distance} \(d_b(A, B)\) between two multisets \(A, B \subseteq \overline{H}\) is 
	\[
	d_b(A, B) 
	\;=\; 
	\inf \bigl\{\,\delta \,\mid\, \exists \text{ a }\delta\text{-matching between }A \text{ and } B \bigr\}.
	\]
\end{definition}

We now relate interleavings of persistence modules to matchings of their persistence diagrams. Recall \(\mathrm{dgm}(\mathbb{U})\) denotes the set (multiset) of points in the extended plane encoding the barcode or persistence diagram of \(\mathbb{U}\).

\begin{theorem}\cite{chazal2016structure}[Stability under Interleavings]
	\label{thm:stability}
	Let \(\mathbb{U}\) and \(\mathbb{V}\) be \(q\)-tame persistence modules that are \(\delta\)-interleaved.  Then there exists a \(\delta\)-matching between the multisets \(\mathrm{dgm}(\mathbb{U})\) and \(\mathrm{dgm}(\mathbb{V})\).  In particular,
	\[
	d_b\bigl(\mathrm{dgm}(\mathbb{U}),\,\mathrm{dgm}(\mathbb{V})\bigr) \;\le\; \delta.
	\]
\end{theorem}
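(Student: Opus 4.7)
The plan is to adapt the classical ``rectangle measure'' (or ``box lemma'') argument of Chazal, de Silva, Glisse, and Oudot for $q$-tame persistence modules over $\mathbb{R}$. The proof proceeds in three stages: first, assign to each $q$-tame persistence module $\mathbb{W}$ a rectangle-valued multiplicity $\mu_{\mathbb{W}}$ on $\overline{\mathcal{H}}$ that counts diagram points of $\mathbb{W}$ inside any rectangle; second, convert the $\delta$-interleaving between $\mathbb{U}$ and $\mathbb{V}$ into a ``box inequality'' relating $\mu_{\mathbb{U}}$ and $\mu_{\mathbb{V}}$ on rectangles with shrunken versus expanded sides; and third, use a Hall-type marriage argument to assemble these local inequalities into a global $\delta$-matching between $\mathrm{dgm}(\mathbb{U})$ and $\mathrm{dgm}(\mathbb{V})$.

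For the first step, given a rectangle $R = [a,b) \times [c,d) \subset \mathcal{H}$ with $a < b \le c < d$, I would set
\[
\mu_{\mathbb{W}}(R) \;=\; \mathrm{rk}(w_a^c) \;-\; \mathrm{rk}(w_a^d) \;-\; \mathrm{rk}(w_b^c) \;+\; \mathrm{rk}(w_b^d),
\]
which is finite because $\mathbb{W}$ is $q$-tame. An inclusion--exclusion argument directly analogous to the identity~\eqref{eq:mu-rewritten} used to define the multiplicities $\mu_i^j$ in the facet setting shows that $\mu_{\mathbb{W}}(R)$ equals the number of points of $\mathrm{dgm}(\mathbb{W})$, counted with multiplicity, that lie inside $R$.

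For the second step, the commuting squares of Definition~\ref{def:delta-interleaving} provide the factorizations $u_s^t = \psi_{t-\delta} \circ v_{s+\delta}^{t-\delta} \circ \varphi_s$ and $v_s^t = \varphi_{t-\delta} \circ u_{s+\delta}^{t-\delta} \circ \psi_s$ whenever $t - s \ge 2\delta$, and hence the interleaving rank inequalities
\[
\mathrm{rk}(u_s^t) \;\le\; \mathrm{rk}(v_{s+\delta}^{t-\delta}) \qquad\text{and}\qquad \mathrm{rk}(v_s^t) \;\le\; \mathrm{rk}(u_{s+\delta}^{t-\delta}).
\]
Applying these corner by corner to the four terms defining $\mu$ yields the box inequality $\mu_{\mathbb{V}}(R) \ge \mu_{\mathbb{U}}(R^{-\delta})$, where $R^{-\delta}$ is the rectangle obtained from $R$ by moving each side inward by $\delta$ (assumed to have positive side lengths); the dual inequality with $\mathbb{U}$ and $\mathbb{V}$ exchanged holds symmetrically.

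The third step translates these count inequalities into a matching: for any $\delta' > \delta$, the box inequalities force every sufficiently separated finite collection of off-diagonal points of $\mathrm{dgm}(\mathbb{U})$ to be dominated, in the Hall sense, by nearby points of $\mathrm{dgm}(\mathbb{V})$ within $\delta'$-neighborhoods, and an infinite Hall-type marriage argument then produces a $\delta'$-matching; letting $\delta' \searrow \delta$ and using local finiteness of both diagrams (from $q$-tameness) extracts the desired $\delta$-matching and hence $d_b(\mathrm{dgm}(\mathbb{U}), \mathrm{dgm}(\mathbb{V})) \le \delta$. The hard part will be step three: the algebraic rank inequalities and box lemma are routine once the factorizations are written down, but assembling local multiplicity bounds into a global $\delta$-matching that respects diagonal slack, behaves correctly at $\pm\infty$, and closes under the limit $\delta' \searrow \delta$ requires the careful combinatorics developed in \cite{chazal2016structure}, which I would invoke rather than redo from scratch.
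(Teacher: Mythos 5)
This theorem is quoted in the paper directly from \cite{chazal2016structure}; the paper offers no proof of its own, so there is no internal argument to compare against, and your proposal is in effect a reconstruction of the argument in that reference. The three-stage outline — rectangle measure from rank functions, a box inequality from the interleaving, and a Hall/marriage step to assemble a matching — is the right skeleton, and you correctly flag stage three as the place where you would defer to the source.

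The genuine gap is in stage two, precisely the step you describe as ``routine once the factorizations are written down.'' The interleaving gives rank inequalities of a single sign, namely $\mathrm{rk}(u_s^t) \le \mathrm{rk}(v_{s+\delta}^{t-\delta})$ and $\mathrm{rk}(v_s^t) \le \mathrm{rk}(u_{s+\delta}^{t-\delta})$. Your $\mu_{\mathbb{W}}(R)$ is an alternating sum of four ranks, so applying these ``corner by corner'' would require the opposite inequality on the two terms carrying a minus sign, and that reverse inequality is not available. Concretely, with $\mu_{\mathbb{V}}(R) = \mathrm{rk}(v_b^c) - \mathrm{rk}(v_a^c) - \mathrm{rk}(v_b^d) + \mathrm{rk}(v_a^d)$, the interleaving controls $\mathrm{rk}(v_b^c)$ from below via $\mathrm{rk}(u_{b-\delta}^{c+\delta})$, but for the negative term you would need $\mathrm{rk}(v_a^c) \le \mathrm{rk}(u_{a-\delta}^{c+\delta})$, which is the wrong direction; chaining through the interleaving only bounds $\mathrm{rk}(v_a^c)$ \emph{above} by $\mathrm{rk}(u_{a+\delta}^{c-\delta})$, which does not help. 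The Box Lemma in \cite{chazal2016structure} is not a termwise consequence of the rank inequalities: it is proved there by realizing the rectangle measure as the dimension of an explicit subquotient of $W_t$ for $t$ between $b$ and $c$ and showing the interleaving morphisms induce injections between the relevant subquotients (equivalently, via a nested-rectangle/tiling argument), which is genuinely more work than you allot to it. You have, in effect, deferred the hard part of stage three but glossed over the hard part of stage two.

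A secondary slip: your formula $\mu_{\mathbb{W}}(R) = \mathrm{rk}(w_a^c) - \mathrm{rk}(w_a^d) - \mathrm{rk}(w_b^c) + \mathrm{rk}(w_b^d)$ is the negative of the quantity that counts points, as one sees by comparing with Eq.~\eqref{eq:mu-rewritten}: with $a = b_{i-1}$, $b = b_i$, $c = b_{j-1}$, $d = b_j$ that identity reads $\mathrm{rk}(w_b^c) - \mathrm{rk}(w_a^c) - \mathrm{rk}(w_b^d) + \mathrm{rk}(w_a^d)$, with the opposite signs on each term. This is easily fixed, but as written your measure is nonpositive, which would also invalidate the monotonicity you later rely on in the Hall step.
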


Having recalled the essential definitions and results from \cite{chazal2016structure}, we now establish the main proof of our result.

\begin{theorem}[Stability of Facet Persistence Barcodes and Diagrams]
	\label{thm:prime-stability}
	Let \(f, g : \Delta \to \mathbb{R}\) be two real-valued functions on a simplicial complex \(\Delta\), and let \((\Delta^t_f)\) and \((\Delta^t_g)\) be the induced filtrations of \(\Delta\).  Denote by 
	\(\mathrm{dgm}(f)\) and \(\mathrm{dgm}(g)\) their corresponding Stanley-Reisner persistence diagrams.  Then
	\[
	d_b\bigl(\mathrm{dgm}(f),\,\mathrm{dgm}(g)\bigr)
	\;\le\;
	\|f - g\|_{\infty},
	\]
	where \(\|f - g\|_{\infty} = \sup_{\sigma \in \Delta}\,\bigl|\,f(\sigma) - g(\sigma)\bigr|\).
\end{theorem}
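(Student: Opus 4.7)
The strategy is to reduce the stability statement to the persistence-module stability theorem (\emph{Stability under Interleavings}, recalled just above) by exhibiting an explicit $\delta$-interleaving between the facet persistence modules $\mathbb{V}^f$ and $\mathbb{V}^g$, where $\delta = \|f-g\|_\infty$. Since $\mathrm{dgm}(\mathbb{V}^f) = \mathrm{dgm}(f)$ and likewise for $g$, the bottleneck bound will follow.

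\textbf{Step 1: sublevel-set inclusions.} Set $\delta = \|f-g\|_\infty$. For any simplex $\sigma$ we have $|f(\sigma)-g(\sigma)|\le\delta$, which immediately yields
\[
\Delta_f^{\,t} \;\subseteq\; \Delta_g^{\,t+\delta},
\qquad
\Delta_g^{\,t} \;\subseteq\; \Delta_f^{\,t+\delta},
\qquad \text{for every } t\in\mathbb{R}.
\]

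\textbf{Step 2: definition of the interleaving maps.} Define degree-$\delta$ homomorphisms
\[
\varphi_t : V_t^{f} \longrightarrow V_{t+\delta}^{g},
\qquad
\psi_t : V_t^{g} \longrightarrow V_{t+\delta}^{f},
\]
on basis vectors by the rule: send the basis vector corresponding to a facet $\sigma$ of $\Delta_f^{\,t}$ to the basis vector $\sigma$ of $V_{t+\delta}^{g}$ if $\sigma$ is still a facet of $\Delta_g^{\,t+\delta}$, and to $0$ otherwise; define $\psi_t$ symmetrically. This is well-defined because the inclusions of Step~1 guarantee $\sigma \in \Delta_g^{\,t+\delta}$, so the question of whether $\sigma$ is maximal there makes sense.

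\textbf{Step 3: the key facet-propagation lemma.} The entire verification rests on the following elementary observation, which I would state and prove as a small lemma: if $\Delta' \subseteq \Delta''$ are subcomplexes of $\Delta$ and $\sigma \in \Delta'$ is a facet of $\Delta''$, then $\sigma$ is automatically a facet of $\Delta'$ (since no strict superface of $\sigma$ lies in $\Delta'' \supseteq \Delta'$). Applied to the chain $\Delta_f^{\,s} \subseteq \Delta_g^{\,s+\delta} \subseteq \Delta_f^{\,t+\delta}$ for $s\le t$, this lemma will collapse the various ``facet-of'' conditions appearing along the two paths in each interleaving square to a single common condition, yielding commutativity. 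I expect this to be the main technical point; once it is in hand, checking the four diagrams in the definition of $\delta$-interleaving is routine. Concretely, for the square
\[
\begin{tikzcd}
V_s^{f} \arrow[r,"v_s^{t,f}"] \arrow[d,"\varphi_s"'] & V_t^{f} \arrow[d,"\varphi_t"]\\
V_{s+\delta}^{g} \arrow[r,"v_{s+\delta}^{t+\delta,g}"'] & V_{t+\delta}^{g}
\end{tikzcd}
\]
both paths send a facet $\sigma$ of $\Delta_f^{\,s}$ to $\sigma$ exactly when $\sigma$ is a facet of $\Delta_g^{\,t+\delta}$, and to $0$ otherwise. For the triangle
\[
\begin{tikzcd}[row sep=2.5em, column sep=1.5em]
V_{s-\delta}^{f} \arrow[rr,"u_{s-\delta}^{s+\delta,f}"] \arrow[dr,"\varphi_{s-\delta}"'] & & V_{s+\delta}^{f}\\
& V_s^{g} \arrow[ur,"\psi_s"']
\end{tikzcd}
\]
the composite $\psi_s\circ\varphi_{s-\delta}$ sends $\sigma$ to itself iff $\sigma$ is a facet of both $\Delta_g^{\,s}$ and $\Delta_f^{\,s+\delta}$, which by the facet-propagation lemma (applied to $\Delta_f^{\,s-\delta}\subseteq \Delta_g^{\,s}\subseteq \Delta_f^{\,s+\delta}$) is equivalent to $\sigma$ being a facet of $\Delta_f^{\,s+\delta}$, matching the top arrow. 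The three remaining diagrams are handled by symmetric arguments.

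\textbf{Step 4: conclusion.} With the $\delta$-interleaving established, $q$-tameness of $\mathbb{V}^f$ and $\mathbb{V}^g$ (each $V_t$ is finite-dimensional) allows us to invoke the Stability under Interleavings theorem to obtain
\[
d_b\bigl(\mathrm{dgm}(\mathbb{V}^f),\mathrm{dgm}(\mathbb{V}^g)\bigr)\;\le\;\delta\;=\;\|f-g\|_\infty,
\]
and identifying $\mathrm{dgm}(\mathbb{V}^f) = \mathrm{dgm}(f)$ and $\mathrm{dgm}(\mathbb{V}^g) = \mathrm{dgm}(g)$ completes the proof. The principal obstacle is the verification that ``facet-ness'' behaves well under the two interleaving paths; once the facet-propagation lemma is in place, everything else is bookkeeping.
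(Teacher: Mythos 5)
Your proof follows the same route as the paper: from $\|f-g\|_\infty = \delta$ obtain the sublevel-set inclusions $\Delta_f^t \subseteq \Delta_g^{t+\delta}$ (and vice versa), use them to define a $\delta$-interleaving between the facet persistence modules $\mathbb{V}^f$ and $\mathbb{V}^g$ by sending a facet basis vector to itself when it remains a facet and to zero otherwise, and then invoke the interleaving stability theorem together with the identification $\mathrm{dgm}(\mathbb{V}^f)=\mathrm{dgm}(f)$. The one genuine improvement is your facet-propagation lemma and the explicit check that the interleaving squares and triangles commute: the paper's proof dismisses this with ``one checks,'' whereas the commutativity is not entirely trivial (both paths must reduce to a single ``is $\sigma$ a facet of the largest complex in the chain'' condition), and your lemma is exactly the right tool to make that reduction.
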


\begin{proof}
	Set \(\delta := \|f - g\|_{\infty}\).  By definition of the supremum norm, for every simplex \(\sigma \in \Delta\),
\[
|f(\sigma) - g(\sigma)| \;\le\; \delta,
\]
which implies
\[
f(\sigma) \;\le\; g(\sigma) + \delta 
\quad\text{and}\quad
g(\sigma) \;\le\; f(\sigma) + \delta.
\]
From \(f(\sigma) \le t\) we get \(g(\sigma) \le t+\delta\), yielding 
\[
\Delta^t_f 
\;=\; \{\sigma \in \Delta \mid f(\sigma)\le t\}
\;\subseteq\;
\{\sigma \in \Delta \mid g(\sigma)\le t+\delta\}
\;=\; \Delta^{t+\delta}_g.
\]
On the level of the persistent Stanley--Reisner ideals, this inclusion of simplicial complexes reverses, so
\[
I\bigl(\Delta^{\,t+\delta}_g\bigr)
\;\subseteq\;
I\bigl(\Delta^t_f\bigr).
\]
Recall that each facet persistence module is built via the minimal primes over these ideals (namely, the facet prime monomial ideals corresponding to facets). Since 
\[
P_\sigma \supseteq I\bigl(\Delta^t_f\bigr) 
\;\Longrightarrow\;
P_\sigma \supseteq I\bigl(\Delta^{\,t+\delta}_g\bigr),
\]
the birth index of a given face prime \(P_\sigma\) in the filtration for \(f\) can differ from its birth index in the filtration for \(g\) by at most \(\delta\).  Concretely,
\[
b_{P_\sigma}^f 
\;\le\; b_{P_\sigma}^g + \delta
\quad\text{and}\quad
b_{P_\sigma}^g 
\;\le\; b_{P_\sigma}^f + \delta,
\]
so \(\bigl|\,b_{P_\sigma}^f - b_{P_\sigma}^g\bigr| \le \delta\).  A similar argument shows that the corresponding death indices \(d_{P_\sigma}^f\) and \(d_{P_\sigma}^g\) also differ by at most \(\delta\). 

Consequently, one obtains a way to construct a \(\delta\)-interleaving between the persistence modules associated with the prime decompositions (facets) of \(\Delta^t_f\) and \(\Delta^t_g\). Specifically, for each \(t\in \mathbb{R}\) one defines linear maps
\[
\phi_t : V^f_t \;\longrightarrow\; V^g_{t+\delta}
\quad\text{and}\quad 
\psi_t : V^g_t \;\longrightarrow\; V^f_{t+\delta}
\]
by sending basis elements (coming from minimal primes over \(I(\Delta^t_f)\)) either to corresponding basis elements (if they remain minimal primes over \(I(\Delta^{t+\delta}_g)\)) or to zero otherwise.  One check that these maps form a degree-\(\delta\) homomorphism that respects the persistence structure, i.e., \ the standard diagrams
\[
\begin{tikzcd}
	V^f_s \arrow[r] \arrow[d, "\phi_s"'] & V^f_t \arrow[d, "\phi_t"] \\
	V^g_{s+\delta} \arrow[r] & V^g_{t+\delta}
\end{tikzcd}
\quad\text{and}\quad
\begin{tikzcd}
	V^g_s \arrow[r] \arrow[d, "\psi_s"'] & V^g_t \arrow[d, "\psi_t"] \\
	V^f_{s+\delta} \arrow[r] & V^f_{t+\delta}
\end{tikzcd}
\]
commute for \(s \le t\).  Thus \(\mathbb{V}^f\) and \(\mathbb{V}^g\) are \(\delta\)-interleaved.

By the standard stability result for interleaved persistence modules, e.g., Theorem \ref{thm:stability}, the bottleneck distance between \(\mathrm{dgm}(f)\) and \(\mathrm{dgm}(g)\) is at most \(\delta\). Therefore,
\[
d_b\bigl(\mathrm{dgm}(f),\,\mathrm{dgm}(g)\bigr) 
\;\le\; 
\|f - g\|_{\infty},
\]
as claimed.
\end{proof}

	\section{Application}
	
		\begin{figure}[h!]
		\centering
		\subfigure{
			\includegraphics[width=0.99\textwidth]{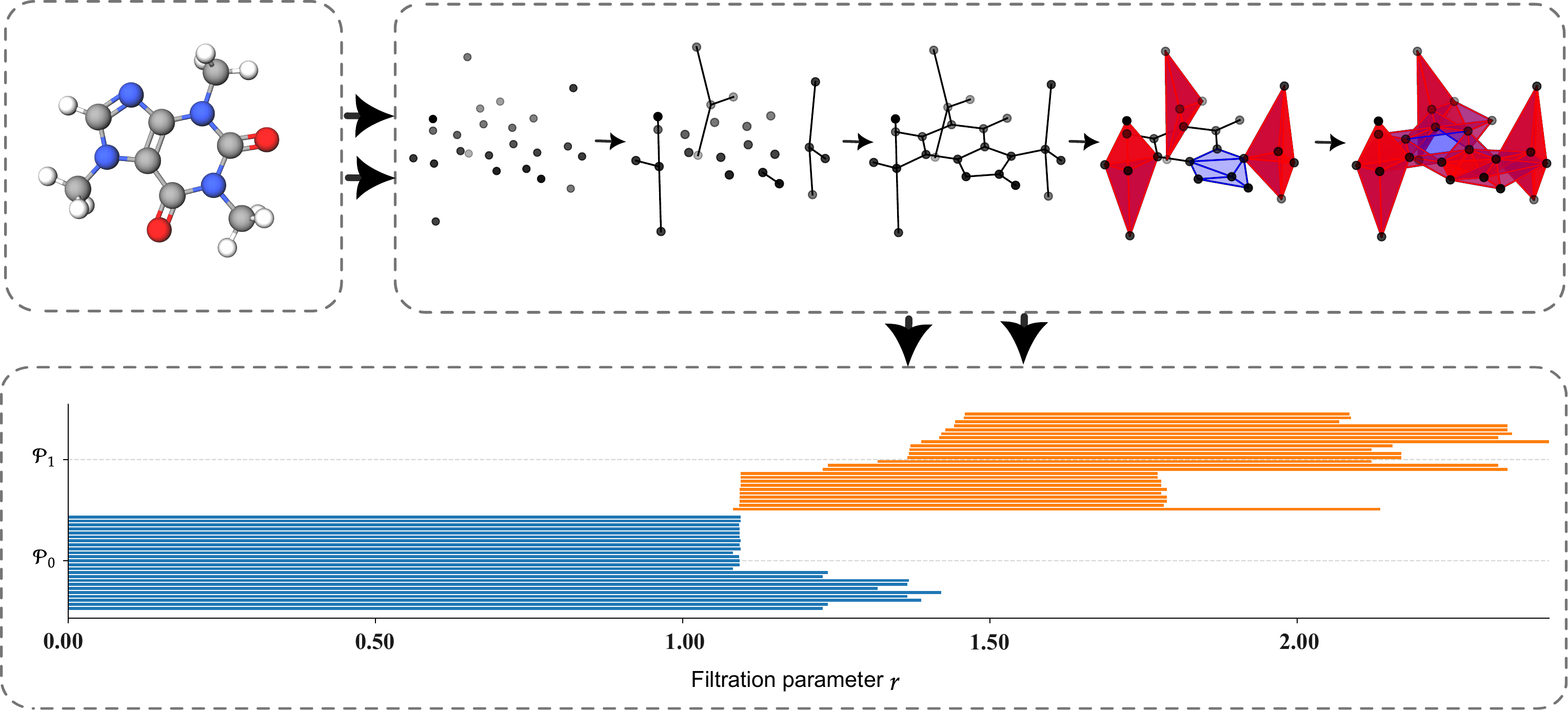} % Path to Figure 1
		}
		\caption{Illustration of the methodology and key steps in the proposed application of persistent Stanley--Reisner theory. Given a molecular input, a corresponding simplicial complex with an associated filtration is generated. Critical values and facet persistence barcodes are computed, leading to the construction of relevant features.
		}
		\label{fig:scheme}
	\end{figure}

Recent developments in artificial intelligence have prompted the design of models that encode molecular identity through composition and atomic arrangement. Approaches grounded in topological data analysis and persistent homology have been employed in various biological and molecular studies \cite{xia2019persistent, chen2023path, anand2022topological}.

These encodings, also called molecular descriptors or representations, are essential in cheminformatics. For example, molecular fingerprints are frequently used in chemical space mapping and virtual screening. In this section, we introduce models based on persistent Stanley--Reisner theory, which builds complexes from a molecule's structure and examines those complexes' topological and geometric characteristics. A key step in this process is filtering the resulting simplicial complex according to a physical parameter, often a radius, which helps reveal different layers of a molecule's shape and connectivity. This process is illustrated in Figure~\ref{fig:scheme}. We propose two applications of persistent Stanley--Reisner theory to molecular analysis.

	\subsection{Discrimination of Isomeric Structures}

	\begin{figure}[h!]
	\centering
	\subfigure{
		\includegraphics[width=0.99\textwidth]{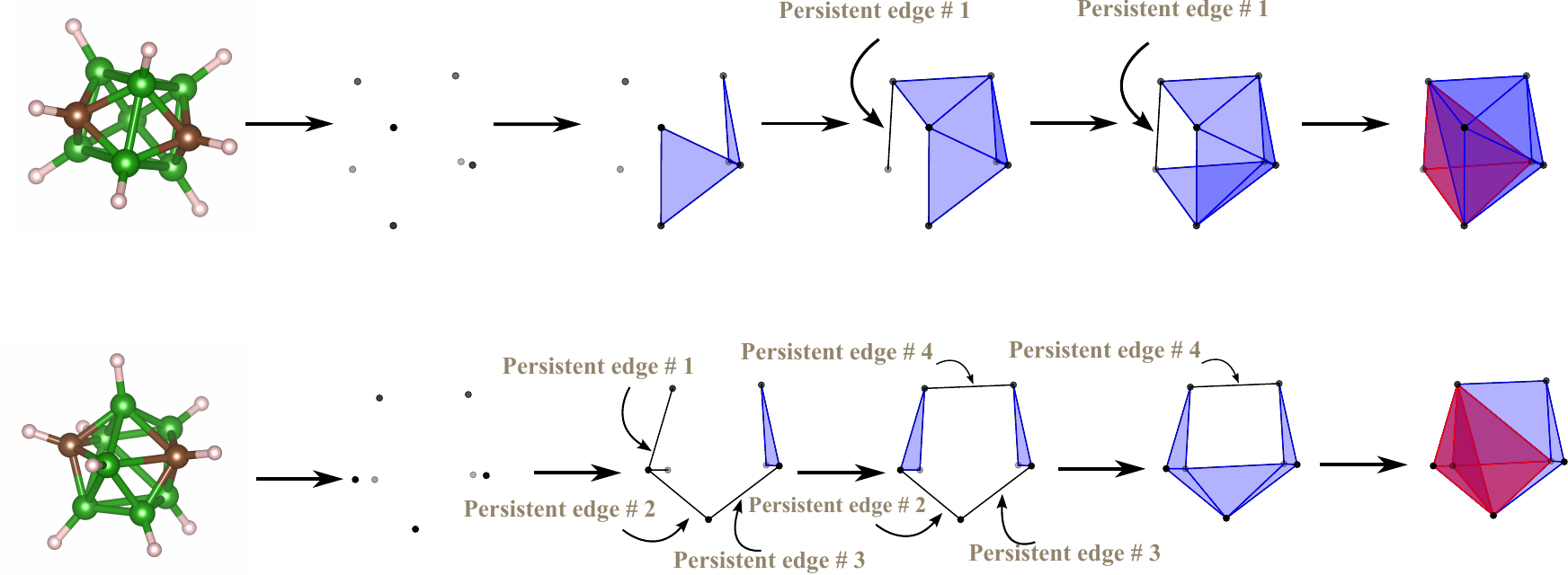} % Path to Figure 1
	}
	\caption{Structural representations and filtrations of two isomers. The top row presents the first isomer and its corresponding filtration, while the bottom row depicts the second and filtration processes. In both structures, hydrogen atoms are represented in white, boron atoms in green, and carbon atoms in brown.}
		\label{fig:isomers}
\end{figure}

	Organic molecules' structural and functional properties are fundamentally determined by their three-dimensional atomic configurations. In particular, the spatial arrangement of atoms within a molecule governs its physical and chemical characteristics, thereby giving rise to isomerism—a phenomenon in which distinct molecules possess an identical molecular formula yet differ in either their atomic connectivity or spatial orientation \cite{clayden2012organic, nelson2017lehninger, pauling1945nature}. Traditional methods for distinguishing isomers rely on the analysis of physical properties (e.g., melting point, boiling point, density, and refractive index), while computational techniques—such as graph-theoretic and topological methods—have also been developed for this purpose \cite{suwayyid2024persistentDirac, suwayyid2024persistentMayer}.

This study uses the facet persistence barcodes as a topological invariant to differentiate between two isomeric structures of \(\mathrm{B_7C_2H_9}\) and their associated protein complexes. The molecular systems under investigation (visualized using Schrödinger's Maestro software, see Figure \ref{fig:isomers}) are analyzed via a Vietoris-Rips filtration to extract salient topological features. Specifically, we consider two isomers of \(\mathrm{B_7C_2H_9}\) carborane, which consists of boron, carbon, and hydrogen atoms. In cases where the isomers exhibit similar atomic arrangements, an element-specific filtration can enhance discrimination; hence, we focus our filtration on the boron atoms, exploiting their spatial organization as a distinctive marker.

	Direct visual discrimination of these isomers is challenging. However, by varying the filtration parameter over the interval \(\left[0,3\right]\,\text{\AA}\), we observe significant differences in the dimension-1 facet persistence barcodes corresponding to edges: the first isomer exhibits a single persistent facet ideal corresponding to an edge, whereas the second isomer displays three or more prominent persistent facet ideals (see Figures~\ref{fig:isomers}). 
The fact that these two barcodes differ indicates that the persistent facet barcodes are sensitive to subtle and substantial differences in the geometric structures and thus provide a noteworthy mathematical basis for the differentiation of molecules. Figure~\ref{fig:isomers_barcodes} presents the Stanley-Reisner barcodes of the primes corresponding to the edges of the first and second isomers, respectively. Moreover, Figure \ref{fig:figure1} and Figure \ref{fig:figure2} show their diagrams respectively.
	
	\begin{figure}[h!]
		\centering
				\subfigure[]{
			\includegraphics[width=0.42\textwidth]{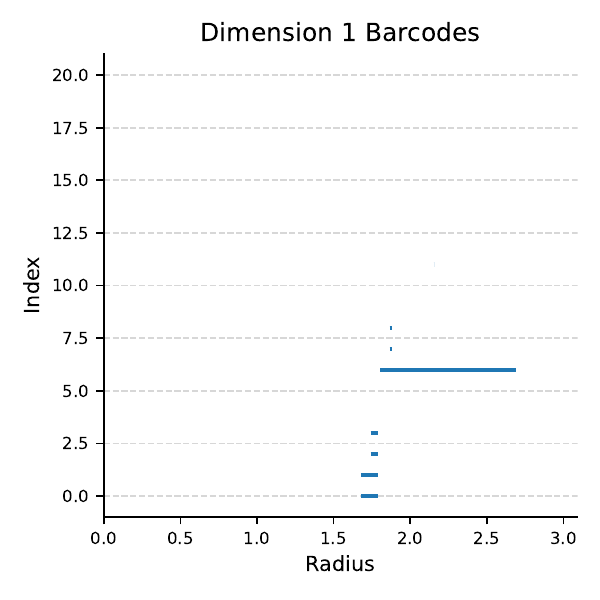} % Path to Figure 2
		}
\hspace{1cm}
				\subfigure[]{
			\includegraphics[width=0.42\textwidth]{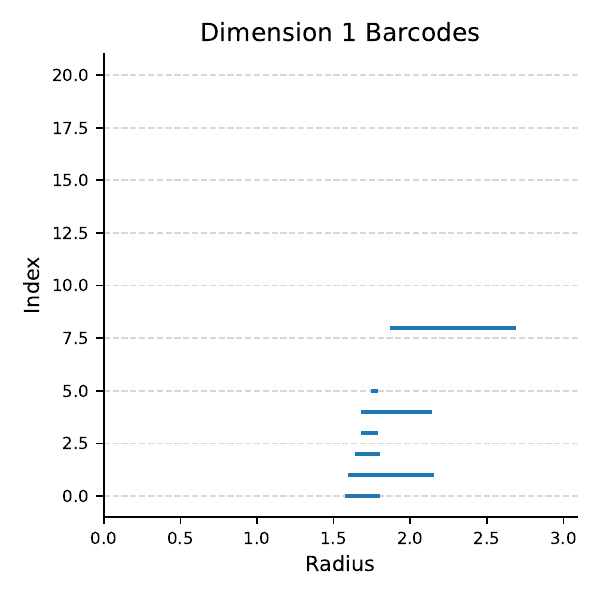} % Path to Figure 2
		}
		\caption{Facet persistence barcodes of the two isomers. (a)  the barcodes of the first isomer. (b) the barcodes of the second isomer. The first isomer exhibits a single essential persistent edge, whereas the second isomer features at least three or more persistent edges.}
				\label{fig:isomers_barcodes}
	\end{figure}
	
			\begin{figure}[h!]
			\centering
		\subfigure[]{
			\includegraphics[width=0.48\textwidth]{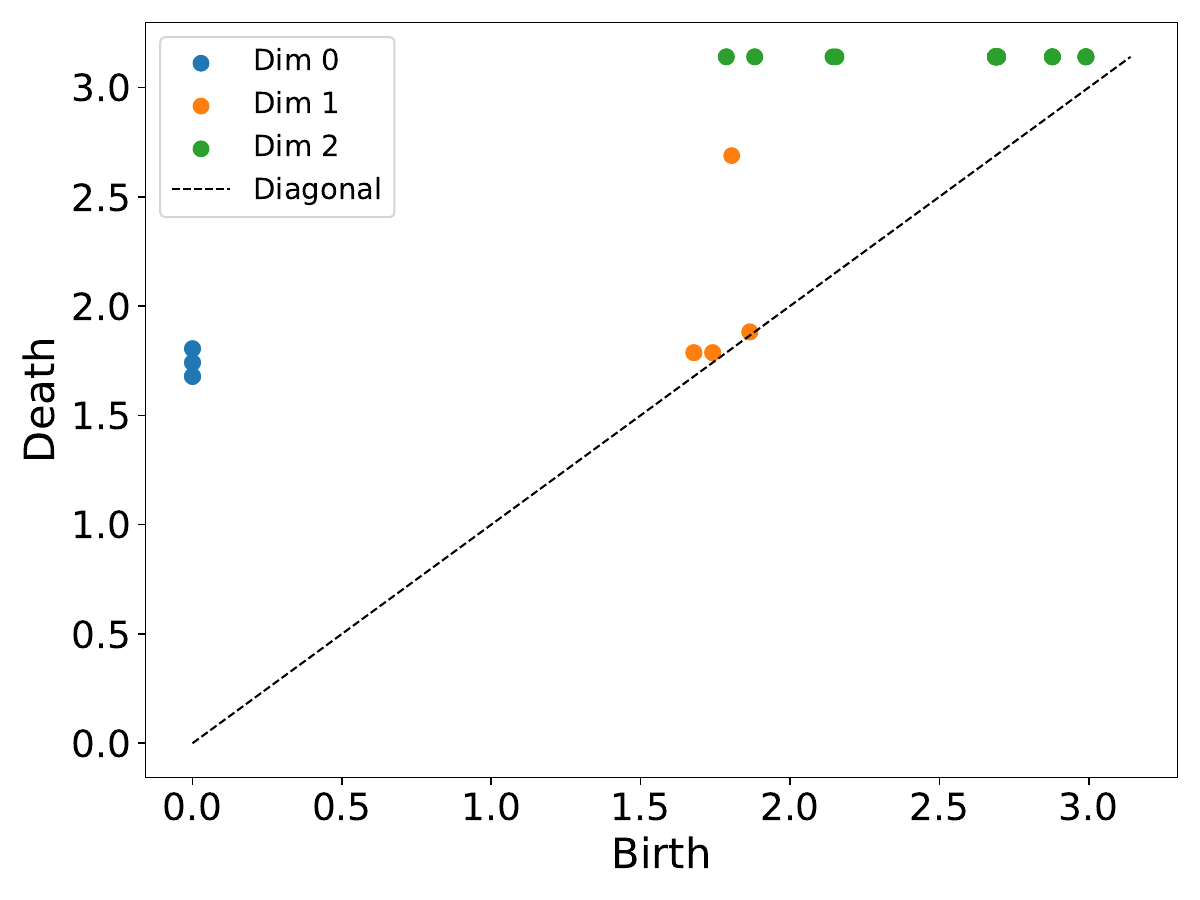} % Path to Figure 1
			\label{fig:figure1}
		}
		%\hfill
		\subfigure[]{
			\includegraphics[width=0.48\textwidth]{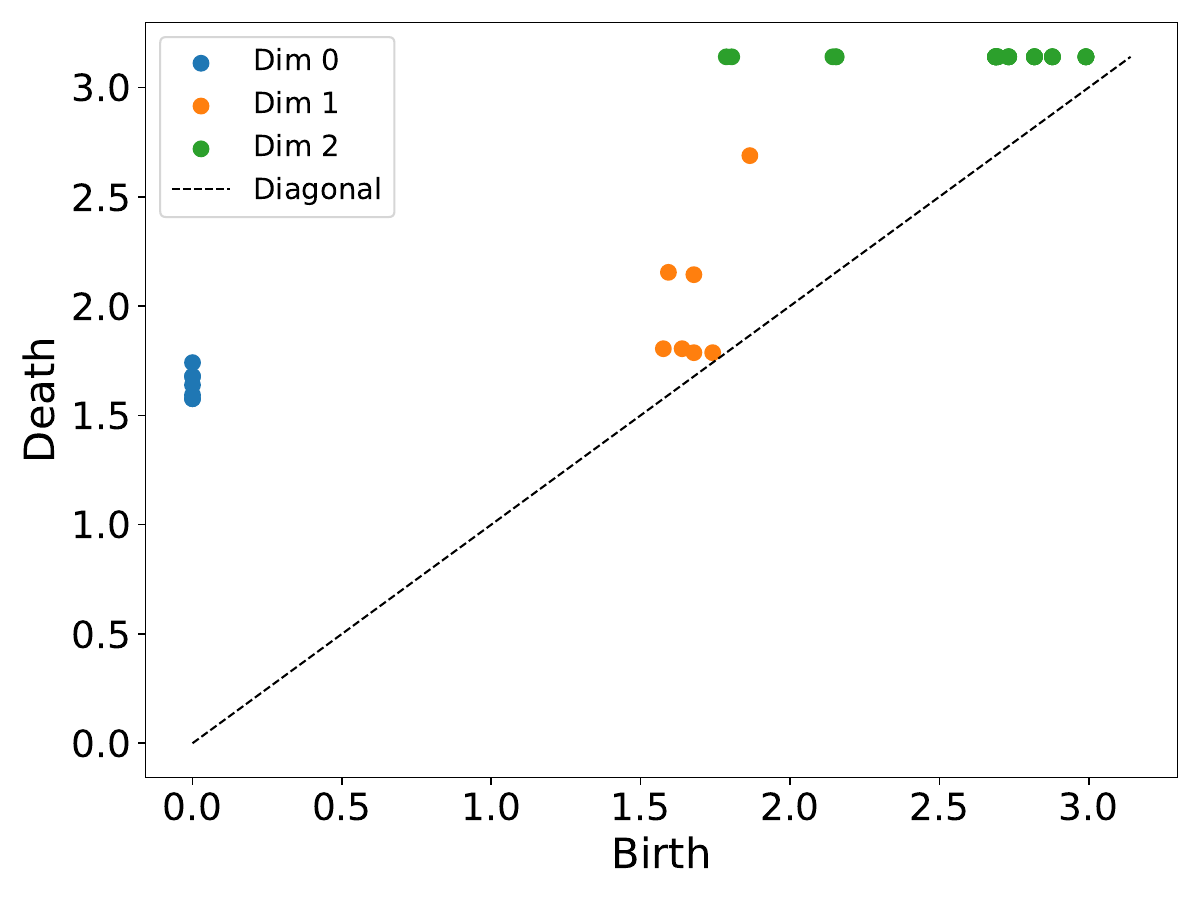} % Path to Figure 2
			\label{fig:figure2}
		}
		\caption{Facet persistence diagrams for the isomers under investigation. (a) the diagram corresponding to the first isomer, capturing the persistence of topological features across the first three homological dimensions. (b) the diagram corresponding to the second isomer, similarly capturing the persistence intervals of the faces of the first three dimensions.}
		\label{fig:isomers_diagrams}
	\end{figure}

	\subsection{Classification of Metal Halide Perovskites}
	
Metal halide perovskites, organic and inorganic, represent cost-effective, highly processable, and performance-efficient materials with significant potential in photovoltaic applications. These materials exhibit different crystal structures that undergo temperature-dependent phase transitions from cubic to tetragonal and orthorhombic configurations. 
Organic/inorganic metal halide perovskites (OIHPs) generally conform to the \(\mathrm{ABX_3}\) crystal structure, where \(A\) denotes a monovalent cation (either organic or inorganic, such as Cs or \(\mathrm{CH_3NH_3}\)), \(B\) represents a divalent cation (e.g., Pb, Sn), and \(X\) corresponds to a halide anion (e.g., Cl, I, Br). 
Topological data analysis (TDA) methods and machine learning algorithms have shown promising results in characterizing these materials \cite{wee2023persistent, suwayyid2024persistentMayer}. This study uses persistent Stanley-Reisner theory to tell apart the phase and atomic constitution of OIHP material. Significantly, this approach relies exclusively on geometric data, such as atomic positions, highlighting its effectiveness in characterizing OIHP materials.
The analysis focuses on three variants of Methylammonium lead halides (\(\mathrm{MAPbX_3}\)), where \(X\) represents Cl, Br, or I, across three distinct phases: orthorhombic, tetragonal, and cubic. Each variant consists of 300 samples, with 100 for each phase yielding 900 samples where the samples are obtained from \cite{wee2023persistent}.

	\begin{figure}[h!]
		\centering
		\subfigure{
			\includegraphics[width=0.65\textwidth]{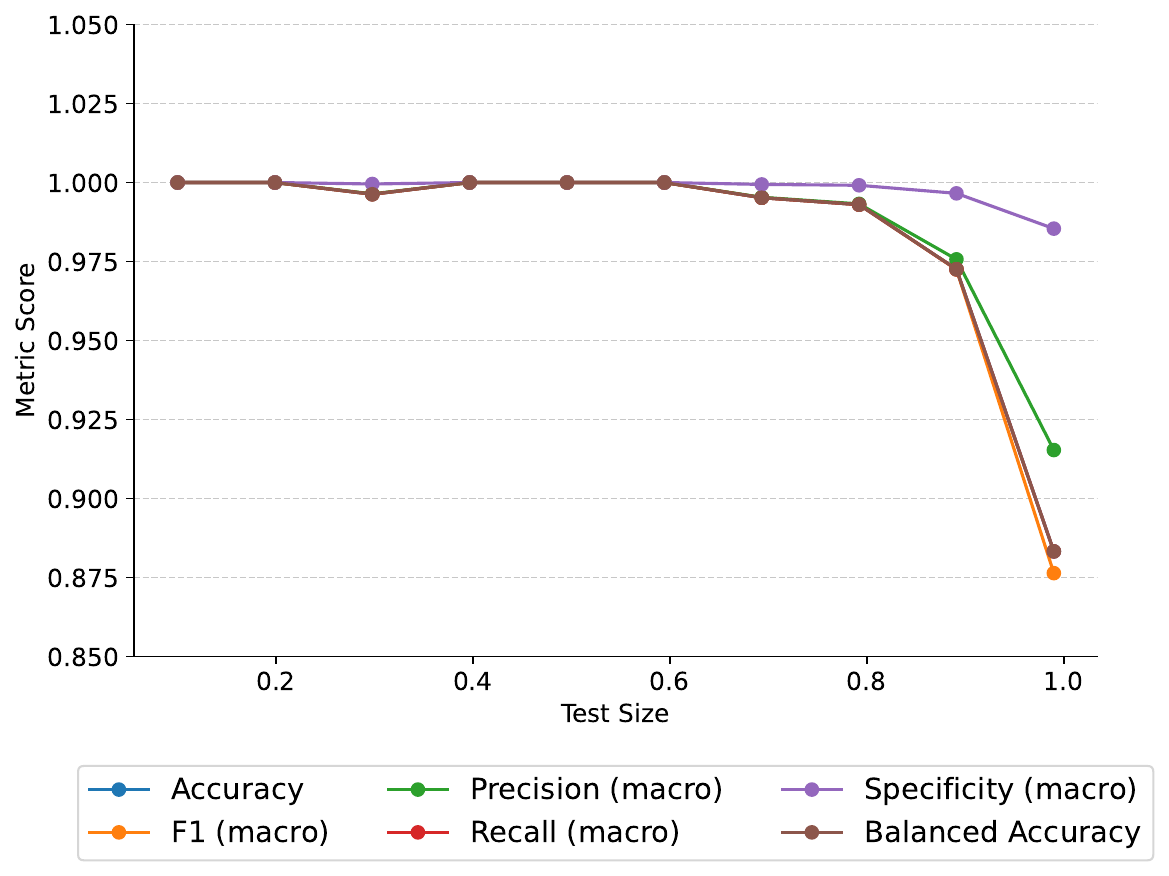} % Path to Figure 1
		}
%		\subfigure{
%			\includegraphics[width=0.45\textwidth]{betti_curves_features.pdf} % Path to Figure 1
%		}
		\caption{The performance of utilizing the critical values of the persistent facet ideals evaluated with six metrics in classifying OIHP materials. The 5-Nearest Neighbors (5-NN) classifier is used on the sets of the critical values generated from the Vietoris--Rips filtration. }
		
		\label{fig:app2_performance}
	\end{figure}
	In our methodology, atoms are treated as points in Euclidean space, and a sequence of Vietoris-Rips simplicial complexes is constructed. The Stanley-Reisner critical values are computed for each molecule, with a filtration radius ranging from \(0 \, \text{\AA}\) to \(7 \, \text{\AA}\). The Hausdorff distance is utilized to quantify differences between the associated sets of critical values. 
	To evaluate classification performance, robustness, and accuracy, we employ a 5-nearest neighbors (5NN) classifier and vary the test set size. As shown in Figure \ref{fig:app2_performance}, classification accuracy is perfect for small test sets and remains nearly perfect even when the test set size reaches 80\%. This result emphasizes the robustness and stability of the proposed methodology. 
	Overall, the study concludes that persistent Stanley-Reisner theory effectively captures the geometric information of OIHPs, providing a mathematically rigorous approach to their characterization. In a comparative analysis, we evaluated a persistent homology–based approach utilizing the persistent Betti numbers of the first two homological dimensions. Empirically, this method performed markedly poorly in the present application, exhibiting an accuracy below fifty percent. This is supported by Figure 5 (discrete Dirac) in \cite{wee2023persistent}.

\section{Conclusion}
Combinatorial commutative algebra integrates commutative algebra and combinatorics. One of its main subjects is the Stanley–Reisner ring of a simplicial complex. We introduce persistent Stanley-Reisner theory by combining the classical apparatus of Stanley--Reisner rings with key ideas from persistent homology. By extending Hochster’s formula to the parametrized setting, we can track how combinatorial and algebraic properties of a simplicial complex evolve as the filtration parameter changes. We showed that the births and deaths of the persistent facet ideals, which correspond to facets in the filtration, produce barcodes analog to those arising in standard topological persistence.

Our stability results guarantee that small perturbations in the underlying filtration do not drastically alter the associated facet persistence barcodes, mirroring the well-known robustness of persistent homology. Furthermore, explicit constructions were given for persistent graded Betti numbers, persistent $h$-vectors, and persistent $f$-vectors, establishing a concrete link between discrete geometry, commutative algebra, and topological data analysis. 

We use two applications to demonstrate the potential of this new theory for data science. In the first application, we used persistent Stanley--Reisner invariants to distinguish molecular isomers. This demonstrates that even subtle three-dimensional (3D) atomic arrangement differences can be encoded and captured in persistence barcodes. In the second, we deployed these methods to classify organic-inorganic metal halide perovskites, leveraging purely geometric information (atomic positions) to highly accurately separate phase and compositional variants.

We anticipate this new framework will find additional applications involving intrinsically complex and high-dimensional data in various fields. Future directions include extending the theory to multi-parameter filtrations, investigating computational algorithms that scale to large data sets, exploring new algebraic topological invariants derivable from persistent Stanley-Reisner theory, and studying commutative algebra with different topological spaces with filtrations, such as path complex, directed flag complex, cellular sheaves, graphs, and hypergraphs \cite{ha2008monomial}.

\section*{Acknowledgments}
This work was supported in part by NIH grants   R01AI164266, and R35GM148196, NSF grants DMS-2052983,  DMS-2245903,  and IIS-1900473,  MSU Foundation, and Bristol-Myers Squibb 65109.
F.S. thanks King Fahd University of Petroleum and Minerals for their support.
G.W.W. thanks Dr. T\`{a}i Huy H\`{a} for useful discussions.  

%\clearpage
 \printindex
		
\bibliographystyle{unsrt}
\bibliography{references}
\end{document}